\renewcommand{\subset}{\subseteq}
\newcommand{\htau}{\widehat{\tau}}
\newcommand{\hS}{\smash{\widehat{S}}}
\newcommand{\hT}{\smash{\widehat{T}}}
\newcommand{\ind}{\mathbb{1}}
\begin{document}

% Article text

\section{Introduction and main results}
\label{sec:introduction_and_results}

This paper is about the simple random walk (SRW) on the two-dimensional
lattice conditioned on not hitting the origin; formally speaking, it is
the Doob's $h$-transform of two-dimensional SRW with respect to the potential
kernel. This random walk (denoted by~$\smash {\widehat {S}}$) is the main
building block of two-dimensional random interlacements introduced in~\cite{comets2016two}
and further studied in~\cite{comets2017vacant} and~\cite{Rod19} (there
is also a continuous version of this process~\cite{comets2019two}). The
two-dimensional random interlacement process is related to (now classical)
random interlacements in dimensions $d\geqslant 3$ (cf.~\cite{CerTei12,DreRatSap14,sznitman2010vacant})
which arise as the limit of the local picture produced by the SRW on the
torus $\mathbb {Z}^{d}_{n}=\mathbb {Z}^{d}/n\mathbb {Z}^{d}$ up to time~$un^{d}$,
and can be described as a canonical Poisson soup of SRW trajectories. The
same construction is impossible in dimension two, simply due to the fact
that the two-dimensional SRW is recurrent. However, if we
\emph{condition} the SRW on the two-dimensional torus~$\mathbb {Z}^{2}_{n}$
to avoid a fixed site at an appropriate time scale, then the local picture
around this site has a limit again. This limit is the two-dimensional random
interlacement process, which is made of conditioned (on not hitting the
origin) SRW trajectories, defined through the canonical construction of
random interlacements on transient weighted graphs of~\cite{teixeira2009interlacement}.
A similar construction can be done also in one dimension~\cite{camargo2016one}.

The conditioned walk~$\smash {\widehat {S}}$ then became an interesting
object on its own. Some of its (sometimes surprising) properties shown
in~\cite{comets2016two,gantert2018range} include
\begin{itemize}
\item $\smash {\widehat {S}}$ is transient; however,
\[
\lim _{y\to \infty } \mathbb {P}_{x_{0}}[\smash {\widehat {S}}
\text{ ever hits }y] = \frac{1}{2}
\]
for any $x_{0}\neq 0$;
\item \emph{any} infinite set is recurrent for~$\smash {\widehat {S}}$;
\item if~$A$ is a ``nice'' large set (e.g., a large disk or square or segment),
then the proportion of sites of~$A$ which are ever visited by~$
\smash {\widehat {S}}$ is a random variable approximately distributed as
Unif$[0,1]$.
\end{itemize}

In this paper, our aim is to further examine the properties of the conditioned
walk~$\smash {\widehat {S}}$. This work may be better described as ``further
quantifying the transience of the conditioned walk.'' Namely, we first
consider the process~$M_{n}$ which is the \emph{future} minimum distance
of the walk to the origin, and show in Theorem~\ref{teo:speed} that it
has quite an ``erratic'' behavior. Further, we will prove in Theorem~\ref{teo:encounters}
that two independent copies of~$\smash {\widehat {S}}$, although both transient,
will nevertheless meet infinitely many times a.s.

Now, let us introduce some notation. For $x, y \in \mathbb {Z}^{2}$ we
say that~$x$ and~$y$ are nearest neighbors if they are at Euclidean distance
$1$, and denote that by~$x \sim y$. Classical simple random walk on the
plane is the random walk on~$\mathbb {Z}^{2}$ that jumps to nearest neighbors
with uniform probabilities. We denote it by
$(S_{n};\; n \geqslant 0)$. For $A \subset \mathbb {Z}^{2}$ we define the
stopping times
\begin{equation}
\label{eq:hitting_times}
\tau (A) := \inf \{ n \geqslant 0;\; S_{n} \in A\} \quad \text{and}
\quad \tau _{+}(A) := \inf \{ n \geqslant 1;\; S_{n} \in A\}.
\end{equation}

When $A = \{x\}$ is a singleton we write simply $\tau (x)$. When studying
properties of $S_{n}$, a fundamental quantity to understand its behavior
is the \emph{potential kernel}
\begin{equation*}
a(x) := \sum _{k=0}^{\infty } \bigl( \mathbb {P}_{0} [S_{k} = 0] -
\mathbb {P}_{x} [S_{k} = 0] \bigr).
\end{equation*}
It is known that $a(0) = 0$, $a(x) = 1$ for any of the $4$ neighbors of
the origin and $a(x) > 0$ for every other $x \in \mathbb {Z}^{2}$. Moreover,
the potential kernel is a harmonic function outside the origin:
\begin{equation*}
a(x) = \frac{1}{4} \sum _{y;\; y \sim x} a(y), \quad
\text{for $x \neq 0$.}
\end{equation*}
This property is the basis of many estimates regarding SRW on the plane,
since it renders $a(S_{n \wedge \tau (0)})$ a martingale. An asymptotic
expression for the potential kernel is given by
\begin{equation}
a(x) = \frac{2}{\pi } \ln |x| + \frac{2 \gamma + 3 \ln 2}{\pi } + O(|x|^{-2}),
\end{equation}
in which $\gamma $ is the Euler-Mascheroni constant. These properties and
more can be found in~\cite{lawler2010random}. Using the potential kernel~$a$,
we define a random walk
$(\smash {\widehat {S}}_{n};\; n \geqslant 0)$ on $\mathbb {Z}^{2}$ that
is closely related to SRW. For presentation's sake, we show their transition
probabilities side by side:
\begin{equation*}
P_{x,y} =
\begin{cases}
\frac{1}{4} &\text{if $y \sim x$},
\\
0 &\text{otherwise},
\end{cases}
\quad \text{and} \quad \widehat{P}_{x,y} =
\begin{cases}
\frac{a(y)}{a(x)} \times \frac{1}{4} &\text{if $y \sim x$},
\\
0 &\text{otherwise}.
\end{cases}
\end{equation*}

Random walk $\smash {\widehat {S}}$ is the Doob $h$-transform of SRW conditioned
on not hitting the origin. It is a key object in defining random interlacements
on $\mathbb {Z}^{2}$, cf.~\cite{comets2016two}. In what follows, we consider
that all random walks are built in a common probability space with probability
and expectation denoted by $\mathbb {P}$ and $\mathbb {E}$, respectively.
We use notation~$\mathbb {P}_{x}$ to explicit the starting point of the
walk and~$\mathbb {P}_{x_{1}, x_{2}}$ when there are two walks involved.

If $r > 0$ and $x \in \mathbb {Z}^{2}$, we denote by
$B(x, r) = \{y \in \mathbb {Z}^{2};\; |y-x| \leqslant r\}$ the discrete
Euclidean disk and define $B(r) := B(0, r)$. Also, we define the internal
boundary of a set $A \in \mathbb {Z}^{2}$ as
\begin{equation*}
\partial A := \{x \in A;\;
\text{there is $y \in \mathbb {Z}^{2}\setminus A$ such that
$x \sim y$}\}.
\end{equation*}
Sometimes, instead of $\mathbb {P}_{x}$ with $x \in \mathbb {Z}^{2}$ we
write $\mathbb {P}_{r}$ for some $r > 0$; this means that the expression
works for any $x \in \partial B(r)$. We also define for $r>0$
\begin{equation*}
a(r) := \frac{2}{\pi } \ln r + \frac{2 \gamma + 3 \ln 2}{\pi }.
\end{equation*}
Moreover, we denote by $\htau (A)$ and $\htau _{+}(A)$ the stopping times
for the $\smash {\widehat {S}}$-walk that are analogous to~\eqref{eq:hitting_times}
and define $\htau (r) := \htau (\partial B(r))$ for $r > 0$.

The next proposition is adapted from~\cite[Proposition~1.1]{gantert2018range}
and presents some of the basic properties of an
$\smash {\widehat {S}}$-walk that were proved in~\cite{comets2016two}.
We also add an explicit expression for
$\smash {\widehat {G}}(x, y) = \mathbb {E}_{x} [\# \
\text{visits to $y$}]$, the Green function of an
$\smash {\widehat {S}}$-walk at item \textit{(vi)}.
\begin{proposition}
\label{prop:properties_hS}
The following statements hold:
\begin{itemize}
\item[(i)] The walk~$\smash {\widehat {S}}$ is reversible, with the reversible
measure~$\mu _{x}:=a^{2}(x)$.
\item[(ii)] In fact, it can be represented as a random walk on the two-dimensional
lattice with conductances
$\big(a(x)a(y), x,y\in \mathbb {Z}^{2}, x\sim y\big)$.
\item[(iii)] Let~$\mathcal{N}$ be the set of the four neighbors of the
origin. Then the process
$1/a(\smash {\widehat {S}}_{k\wedge \htau (\mathcal{N})})$ is a martingale.
\item[(iv)] The walk $\smash {\widehat {S}}$ is transient.
\item[(v)] For all $x\neq y$ with $x,y\neq 0$ we have
\begin{align*}
\mathbb {P}_{x}\big[\htau _{+}(x)<\infty \big] &= 1 - \frac{1}{2a(x)},
\\
\mathbb {P}_{x}\big[\htau (y)<\infty \big] &=
\frac{a(x)+a(y)-a(x-y)}{2a(x)}.
\end{align*}
\item[(vi)] The Green function of an $\smash {\widehat {S}}$-walk is given
by
\begin{equation}
\label{eq:conditional_green_function}
\smash {\widehat {G}}(x, y) = \frac{a(y)}{a(x)} \cdot (a(x) + a(y) - a(x
- y)).
\end{equation}
\end{itemize}
\end{proposition}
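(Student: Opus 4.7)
The plan is to observe that items (i)--(v) are already established in \cite{comets2016two}, so the only genuinely new point is the Green-function formula in (vi), which is a direct corollary of (v). I will nevertheless sketch how one would recover (i)--(v) from first principles, since they all hinge on a single fact: the harmonicity of the potential kernel $a$ outside the origin.

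First I would derive (i) and (ii) simultaneously. The harmonicity identity $\sum_{y \sim x} a(y) = 4 a(x)$ for every $x \neq 0$ shows that the network with conductances $c(x,y) := a(x) a(y)$ (for $x \sim y$) has transition probabilities
\[
\frac{c(x,y)}{\sum_{z \sim x} c(x,z)} = \frac{a(y)}{4 a(x)} = \widehat{P}_{x,y},
\]
so $\widehat{S}$ is a random walk on this network, reversible with respect to $\mu_x \propto a(x)^2$. Item (iii) is then a one-line computation,
\[
\mathbb{E}_x\Bigl[\tfrac{1}{a(\widehat{S}_1)}\Bigr] = \sum_{y \sim x} \frac{a(y)}{4 a(x)} \cdot \frac{1}{a(y)} = \frac{1}{a(x)},
\]
valid exactly when every neighbor of $x$ has positive potential, i.e.\ as long as $\widehat{S}$ has not yet visited $\mathcal{N}$. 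Items (iv) and (v) then follow by optional stopping applied to the bounded martingale $1/a(\widehat{S}_{\cdot \wedge \htau(y) \wedge \htau(\partial B(R))})$, using the asymptotic $a(z) = \frac{2}{\pi} \ln |z| + O(1)$ to evaluate the contribution of $\partial B(R)$ as $R \to \infty$.

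Item (vi), the new statement, follows from (v) by the usual last-exit decomposition of the Green function. Decomposing on successive returns to $y$ and using the first identity of (v),
\[
\widehat{G}(y,y) = \frac{1}{1 - \mathbb{P}_y[\htau_+(y)<\infty]} = 2 a(y).
\]
The strong Markov property applied at $\htau(y)$ then gives
\[
\widehat{G}(x,y) = \mathbb{P}_x[\htau(y)<\infty] \cdot \widehat{G}(y,y) = \frac{a(x)+a(y)-a(x-y)}{2 a(x)} \cdot 2 a(y),
\]
which is exactly the claimed formula; for $x = y$ it also returns the correct value $2 a(x)$, since $a(0) = 0$.

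The main obstacle lies not in (vi) but in the optional-stopping arguments for (iv)--(v): one must show that the walk actually leaves $B(R)$ and pass to the limit $R \to \infty$ carefully using the slow (logarithmic) growth of $a$. Since these steps are carried out in \cite{comets2016two}, I would simply cite that reference for (i)--(v) and present the one-line argument above for the new item (vi).
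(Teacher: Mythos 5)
Your proposal is correct and follows essentially the same route as the paper: items (i)--(v) are cited from \cite{comets2016two}, and (vi) is derived as a consequence of (v). The paper states only that ``Item (vi) is a consequence of (v)'' without giving the argument; your last-exit decomposition $\widehat G(x,y)=\mathbb P_x[\htau(y)<\infty]\,\widehat G(y,y)$ with $\widehat G(y,y)=\bigl(1-\mathbb P_y[\htau_+(y)<\infty]\bigr)^{-1}=2a(y)$ is precisely the intended (and correct) derivation, including the check that the formula returns $2a(y)$ on the diagonal since $a(0)=0$.
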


Item \textit{(vi)} is a consequence of \textit{(v)}. A straightforward derivation
of $\textit{(vi)}$ without the aid of $\textit{(v)}$ and further potential-theoretic
results about the $\smash {\widehat {S}}$-walk can be found in~\cite{popov2019conditioned}.

Reference~\cite{gantert2018range} is devoted to proving properties related
to how an $\smash {\widehat {S}}$-walk intersects some sets. This work
advances in the direction of comprehending trajectories of
$\smash {\widehat {S}}$, but focuses on almost sure properties related
to its speed of transience and the relationship between two independent
copies of $\smash {\widehat {S}}$.

Let us state the main results from the paper. By Proposition~\ref{prop:properties_hS},
we know that $\smash {\widehat {S}}_{n}$ is transient. Our first result
is a quantitative assessment of how fast this transience happens. Let us
define
\begin{equation*}
M_{n} := \min _{m \geqslant n} |\smash {\widehat {S}}_{m}| \quad
\text{and} \quad T_{u} := \sup \{n \geqslant 0;\; |
\smash {\widehat {S}}_{n}| \leqslant u\}.
\end{equation*}

There is a simple relation between these quantities, which can be seen
by
\begin{equation}
\label{eq:relation_Mn_Tu}
\{ T_{u} \geqslant n\} = \big\{\text{there exists }m \geqslant n
\text{ such that } |\smash {\widehat {S}}_{m}| \leqslant u \big\} = \{M_{n}
\leqslant u\}.
\end{equation}

We develop some almost sure asymptotic properties of $M_{n}$. Each of them
can be translated into a property for $T_{u}$ via~\eqref{eq:relation_Mn_Tu}.
\begin{theorem}
\label{teo:speed}
For every $0 < \delta < \frac{1}{2}$ we have, almost surely,
\begin{equation}
\label{eq:teo_speed_io}
M_{n} \leqslant n^{\delta }\ \text{i.o.} \quad \text{but} \quad M_{n}
\geqslant \tfrac{\sqrt{n}}{\ln ^{\delta } n}\ \text{i.o.}
\end{equation}
and, on the other hand,
\begin{equation}
\label{eq:teo_speed_eventually}
e^{\ln ^{1-\delta } n} \leqslant M_{n} \leqslant \sqrt{(e+\delta )n (
\ln \ln n)} \quad \text{ eventually}.
\end{equation}
\end{theorem}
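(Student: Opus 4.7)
The plan is to exploit two key consequences of Proposition~\ref{prop:properties_hS}: the martingale property of $1/a(\hS_n)$ (item~(iii), since $1/a$ is $\hS$-harmonic off the origin) and the explicit Green function formula from item~(vi). Optional stopping of $1/a(\hS)$ at exit times of concentric annuli gives, for $r \ll |x|$,
\[
\mathbb{P}_x\bigl[\htau(B(r)) < \infty\bigr] = \frac{a(r)}{a(|x|)} + o(1),
\]
translating the radial behaviour of $\hS$ into elementary logarithmic ratios. Summing the Green function on a disk gives $\mathbb{E}_{x_{0}}\bigl[\#\{n : |\hS_n| \leqslant R\}\bigr] = O(R^{2} \ln^{2} R)$, since $\hG(x_{0}, y) \lesssim a(y)^{2}/a(x_{0})$ for the relevant $y$.

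For the eventual lower bound $M_{n} \geqslant e^{\ln^{1-\delta} n}$, I would introduce the last visit time $\rho(R) := \sup\{n : |\hS_n| \leqslant R\}$, which is a.s.\ finite by transience, and observe $\{M_n > R\} = \{\rho(R) < n\}$. Markov's inequality on the occupation-time bound yields $\mathbb{P}[\rho(R) \geqslant T] \leqslant C R^{2}\ln^{2} R/T$. Along the dyadic scale $R_{k} = 2^{k}$ with $T_{k} = \exp((\ln R_{k})^{1+\varepsilon})$ the right-hand side is summable, so the first Borel--Cantelli lemma combined with the monotonicity $R \mapsto \rho(R)$ gives $\rho(R) \leqslant \exp((\ln R)^{1+\varepsilon})$ eventually in $R$. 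Substituting $R = e^{\ln^{1-\delta} n}$ and choosing $\varepsilon < \delta/(1-\delta)$ forces $\rho(R) < n$ for all sufficiently large $n$, which is the desired bound.

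For the eventual upper bound $M_{n} \leqslant \sqrt{(e+\delta)\,n\,\ln \ln n}$, the trivial inequality $M_n \leqslant |\hS_n|$ reduces the statement to a law-of-iterated-logarithm type upper bound on the walk itself. The constant $e$ should emerge from the sharp asymptotic of an exponential moment of $|\hS_n|^{2}/n$ (the drift induced by the $h$-transform gives the leading constant $e$ rather than the classical $2$); a Chernoff-type estimate followed by a Borel--Cantelli argument along a geometric subsequence then yields the claim.

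For the two ``infinitely often'' statements, the strategy is to pick a rapidly growing sequence of anchor times $T_{k}$ and aim at the second Borel--Cantelli lemma along it. Using a second-moment estimate to localize $|\hS_{T_{k}}| \asymp \sqrt{T_{k}}$ with high probability, the hitting-probability formula supplies: (a) probability of order $2\delta$ of returning to $B(T_{k}^{\delta})$ at some later time, producing $M_{n} \leqslant n^{\delta}$ at the return instant; and (b) probability of order $(\ln \ln T_{k})/\ln T_{k}$ of being at the typical scale $\sqrt{T_k}$ and never subsequently dipping below $\sqrt{T_{k}}/\ln^{\delta} T_{k}$. Both sums diverge along a suitable choice of $T_{k}$, yielding the i.o.\ conclusions---\emph{provided} the events can be organized to be asymptotically independent. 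This independence is the main obstacle: the natural route is to decompose each trajectory at hitting times of widely separated annuli and invoke the strong Markov property, possibly complemented by a conditional Borel--Cantelli lemma or a direct second-moment computation on the indicators.
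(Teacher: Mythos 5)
The most concrete gap is in your argument for the eventual lower bound $M_n \geqslant e^{\ln^{1-\delta}n}$. You write that ``Markov's inequality on the occupation-time bound yields $\mathbb{P}[\rho(R)\geqslant T]\leqslant CR^2\ln^2R/T$,'' but the expected total occupation time of $B(R)$ does not control the tail of the \emph{last visit time} $\rho(R)$: a single visit at time $T$ makes $\rho(R)=T$ while the occupation time is $1$, so there is no inclusion of events in the direction you need, and Markov's inequality simply does not apply. In fact your bound is false: the correct order of $\mathbb{P}[\rho(R)\geqslant T]$ is $\ln R/\ln T$, which one gets by conditioning on the (typical) position $|\hS_T|\asymp\sqrt{T}$ and using the hitting estimate $\mathbb{P}_z[\htau(R)<\infty]\approx a(R)/a(z)$. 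The decay is only logarithmic in $T$, not polynomial. Consequently your dyadic scale $R_k=2^k$, $T_k=\exp((\ln R_k)^{1+\varepsilon})$ gives $\mathbb{P}[\rho(R_k)\geqslant T_k]\asymp k^{-\varepsilon}$, which is \emph{not} summable for the range $\varepsilon<\delta/(1-\delta)<1$ you need. A much sparser sequence of scales (the paper uses $x_k=e^{k^m}$, $n_k=e^{k^{m+2}}$, giving probabilities $\asymp k^{-2}$) is required, after which one still has to interpolate carefully because $M_n$ is monotone but the scale jumps are huge. So the occupation-time route does not lead anywhere here; you must go through last-exit decomposition and hitting probabilities directly.

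A second, smaller issue: you attribute the constant $e$ in the eventual upper bound to a ``drift induced by the $h$-transform.'' This is a misattribution; the $h$-transform tilts the walk only near the origin and does not change the diffusive scale. The constant $e$ comes from interpolating the monotone process $M_n$ along the geometric subsequence $n_k=e^k$: if $n_k\leqslant t<n_{k+1}$ then $M_t\leqslant M_{n_{k}}\leqslant|\hS_{n_k}|\lesssim\sqrt{n_k\ln\ln n_k}$, and $n_k$ can be as large as $e\cdot t$. Your Chernoff/Borel--Cantelli framework for this part is otherwise fine. For the two ``infinitely often'' claims your outline correctly identifies the bottleneck (dependence of the events) but does not resolve it; the paper handles $M_n\leqslant n^\delta$ i.o.\ by constructing an explicit sequence of stopping times with a uniform conditional hitting probability (second Borel--Cantelli applies directly), and handles $M_n\geqslant\sqrt{n}/\ln^\delta n$ i.o.\ with a Kochen--Stone second-moment computation over the widely separated scales $n_k=e^{k\ln^2 k}$, requiring a nontrivial decoupling of the pairwise joint events. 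Your sketch gestures at both but would need the full quantitative correlation control to close.
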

Quantity $M_{n}$ is non-trivial only for transient walks since for a recurrent
walk, e.g. SRW on $\mathbb {Z}^{2}$, one has $M_{n} = 0$ almost surely
for every $n$.

Our second result studies the evolution of two independent
$\smash {\widehat {S}}$-walks, denoted $\smash {\widehat {S}}^{1}$ and
$\smash {\widehat {S}}^{2}$.
\begin{theorem}
\label{teo:encounters}
Let $x_{1}, x_{2} \in \mathbb {Z}^{2} \setminus \{0\}$ have the same parity.
Then, we have
\begin{equation}
\label{eq:encounters}
\mathbb {P}_{x_{1},x_{2}} \big[\smash {\widehat {S}}_{n}^{1} =
\smash {\widehat {S}}_{n}^{2}\ \text{i.o.}\big] = 1.
\end{equation}
\end{theorem}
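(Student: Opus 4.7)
The plan is a second-moment argument on the number of meetings $N_n := \sum_{m=1}^n \ind\{\hS^1_m = \hS^2_m\}$, combined with a conditional Borel--Cantelli upgrade. Write $p_m(x,y) := \mathbb{P}_x[\hS_m = y]$; the parity assumption ensures $\hS^1_m$ and $\hS^2_m$ have matching parities at every time, so that meetings are not excluded for trivial reasons.

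The key local estimate is $p_m(x,y) \asymp 1/m$ for $|y|$ of order $\sqrt m$, with constants uniform in $x$ on bounded scales. This is obtained from the Doob $h$-transform identity $p_m(x,y) = \frac{a(y)}{a(x)} \mathbb{P}_x[S_m = y, \tau(0) > m]$ combined with the local CLT for two-dimensional SRW and the classical estimate that $\mathbb{P}_x[\tau(0) > m]$ is of order $a(x)/\ln m$. Summing over the relevant $y$ then gives
\[
q_m := \mathbb{P}_{x_1,x_2}[\hS^1_m = \hS^2_m] = \sum_y p_m(x_1,y) p_m(x_2,y) \asymp 1/m,
\]
so $\mathbb{E}[N_n] \asymp \ln n \to \infty$. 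The Markov property rewrites the second moment as
\[
\mathbb{E}[N_n^2] = \sum_{m_1 \leqslant m_2 \leqslant n} \sum_{y} p_{m_1}(x_1,y) p_{m_1}(x_2,y) \sum_z p_{m_2 - m_1}(y,z)^2,
\]
and the same local estimate applied now to the pair $(y,z)$ yields $\sum_z p_s(y,z)^2 \lesssim 1/s$ uniformly for $y$ on the relevant scale, whence $\mathbb{E}[N_n^2] \lesssim (\ln n)^2 \asymp (\mathbb{E}[N_n])^2$. Paley--Zygmund then gives a constant $c > 0$ with $\mathbb{P}_{x_1,x_2}[N_\infty = \infty] \geqslant c > 0$.

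To upgrade from positive probability to probability one, apply the same second-moment argument along a dyadic subsequence $n_k = 2^k$ starting from arbitrary positions $(y_1,y_2)$: for $n_k$ larger than $\max(|y_1|^2, |y_2|^2)$, the probability of at least one meeting during the window $[n_k, 2 n_k]$ is bounded below by a positive constant \emph{independent} of $(y_1,y_2)$ and $k$, since after this time both walks have reached the diffusive scale $\sqrt{n_k}$. Combining this with the strong Markov property at the times $n_k$ and L\'evy's conditional Borel--Cantelli lemma produces meetings in infinitely many dyadic windows almost surely, which gives~\eqref{eq:encounters}. The main technical obstacle is establishing the local estimate $p_m(x,y) \asymp 1/m$ sharply and uniformly in the appropriate regime, requiring a careful combination of the SRW local CLT, the avoidance-of-origin estimate, and the Radon--Nikodym tilt $a(y)/a(x)$; a subsidiary difficulty is ensuring that the lower bound on the meeting probability in $[n_k, 2 n_k]$ is uniform in the starting positions at scale $\sqrt{n_k}$, as needed for the conditional-Borel--Cantelli step.
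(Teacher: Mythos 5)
Your overall strategy --- second-moment estimates on the number of coincidences plus a conditional Borel--Cantelli upgrade --- matches the paper's, and your first- and second-moment calculations for $N_n$ on $[1,n]$ are sound once Proposition~\ref{prop:lclt} is in hand: $\mathbb{E}[N_n] \asymp \ln n$ and $\mathbb{E}[N_n^2] = O(\ln^2 n)$ both check out, using the uniform bound~\eqref{eq:uniform_up_bound_lclt} to get $\sum_z \mathbb{P}_y[\smash{\widehat{S}}_s = z]^2 \leqslant C/s$ regardless of the scale of $y$.

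The gap is in the upgrade. Your claim that the probability of at least one meeting in the dyadic window $[n_k, 2n_k]$, started from positions at scale $\sqrt{n_k}$, is bounded below by a \emph{constant} independent of $k$ is false. Running your own two-moment computation on the count $N'_k$ of coincidences in $[n_k, 2n_k]$, conditionally on $\mathcal{F}_{n_k}$, gives $\mathbb{E}[N'_k \mid \mathcal{F}_{n_k}] \asymp \sum_{m=n_k}^{2n_k} m^{-1} \asymp 1$; but the conditional second moment is $\asymp \ln n_k \asymp k$, because the off-diagonal sum over $m_1 < m_2$ produces the inner factor $\sum_{s=1}^{O(n_k)} s^{-1} \asymp \ln n_k$. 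In words: once the two walks coincide they re-coincide an expected $\Theta(\ln n_k)$ more times before leaving the window, so meetings cluster, and the count has second moment of order $k$ while its mean is of order one. Paley--Zygmund therefore yields only $\mathbb{P}[N'_k \geqslant 1 \mid \mathcal{F}_{n_k}] \geqslant c/k$, which is in fact the right order. The paper's choice of super-exponential windows $b_k = \lfloor e^{3^k}\rfloor$ exists precisely to avoid this: it gives $\mathbb{E}[N'_k \mid \mathcal{F}_{b_{k-1}}] \asymp \ln(b_{k+1}/b_k) \asymp 3^k$ and $\mathbb{E}[(N'_k)^2\mid \mathcal{F}_{b_{k-1}}] = O(\ln^2 b_{k+1}) = O(9^k)$, so the Paley--Zygmund ratio is bounded below by a genuine constant. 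Your dyadic variant could in principle be salvaged, since $\sum_k 1/k$ diverges and the conditional Borel--Cantelli lemma only needs the conditional probabilities to have an a.s.\ divergent sum; but as written, the argument rests on a uniform constant lower bound that is simply not true, and this is exactly the window-design issue the paper's $b_k$ were chosen to resolve. A secondary remark: the product heuristic ``SRW LCLT $\times$ origin-avoidance $\times$ Radon--Nikodym tilt'' undercounts the actual answer $\asymp a(y)^2/(n\ln^2 n)$ by a factor $\asymp a(y)/\ln n$ when $|y| \ll \sqrt n$, because $\{S_n = y\}$ and $\{\tau(0) > n\}$ are not independent; this happens to wash out in the regime $|y| \asymp \sqrt n$ that you actually use, but the full proof of Proposition~\ref{prop:lclt} requires the reflection-principle and sector decomposition in the paper, not a naive factorization.
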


\begin{remark}
Theorem~\ref{teo:encounters} shows that almost surely two independent copies
of an $\smash {\widehat {S}}$-walk will meet infinitely many times. This
property holds for two independent SRWs since $(S^{1} - S^{2})_{n}$ is
a mean-zero random walk with bounded increments (see
\cite[Theorem~4.1.1]{lawler2010random}). Following the proof of Theorem~\ref{teo:encounters},
one could show analogous results when one of the walks is a SRW and the
other is an independent $\smash {\widehat {S}}$-walk or when both are SRWs.
We also notice that the problem of counting intersections of two independent
SRWs when we allow $S^{1}_{i} = S^{2}_{j}$ with $i \neq j$ is another well-studied
problem, cf. \cite{lawler2013intersections}.
\end{remark}

Finally, we mention that while developing the tools to prove Theorems~\ref{teo:speed}
and \ref{teo:encounters} we obtained a version of local Central Limit Theorem
for $\smash {\widehat {S}}_{n}$ which can be of independent interest.
\begin{proposition}
\label{prop:lclt}
If $1 \leqslant |x|, |y| \leqslant \sqrt{Mn}$ and $x-y$ has same parity
as $n$, then there is $n_{0}(M)$ positive such that for
$n \geqslant n_{0}$
\begin{equation}
\label{eq:lclt}
\mathbb {P}_{x}\bigl[ \smash {\widehat {S}}_{n} = y \bigr] \asymp
\frac{a(y)^{2}}{\ln ^{2} n} \times \frac{1}{n},
\end{equation}
where the implied constants in ``$\asymp $'' depend on $M > 1$ but not
on $n$, $x$, $y$. Moreover, there is a universal positive constant
$C$ such that for any $x, y \neq 0$ and any $n$ it holds
\begin{equation}
\label{eq:uniform_up_bound_lclt}
\mathbb {P}_{x} [\smash {\widehat {S}}_{n} = y] \leqslant \frac{C}{n}.
\end{equation}
\end{proposition}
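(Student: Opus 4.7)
The starting point is the Doob transform identity, which follows by telescoping the factors $a(\cdot)/a(\cdot)$ along any path from $x$ to $y$ avoiding the origin:
\begin{equation*}
\mathbb{P}_x\bigl[\widehat{S}_n = y\bigr] = \frac{a(y)}{a(x)}\, \mathbb{P}_x\bigl[S_n = y,\; \tau(0) > n\bigr].
\end{equation*}
This reduces the claim to the two-sided killed-walk estimate $\mathbb{P}_x[S_n = y, \tau(0) > n] \asymp a(x)a(y)/(n \ln^2 n)$ in the range $1\le|x|,|y|\le\sqrt{Mn}$ with $x-y$ of the parity of $n$, together with a matching uniform upper bound for \eqref{eq:uniform_up_bound_lclt}.

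The first ingredient is the classical gambler's-ruin asymptotic
\begin{equation*}
\mathbb{P}_x[\tau(0) > n] \asymp \frac{a(x)}{\ln n}, \qquad 1 \le |x| \le \sqrt{Mn},
\end{equation*}
which follows from optional stopping of the martingale $a(S_{\cdot\wedge\tau(0)})$ at the first exit time of $B(C\sqrt{n})$, the exit time being controlled by the ordinary LCLT. The second is a Markov-property plus SRW-reversibility decomposition at $\lfloor n/2\rfloor$,
\begin{equation*}
\mathbb{P}_x[S_n=y,\tau(0)>n] = \sum_{z} \mathbb{P}_x\bigl[S_{\lfloor n/2\rfloor}=z,\,\tau(0)>\lfloor n/2\rfloor\bigr]\,\mathbb{P}_y\bigl[S_{\lceil n/2\rceil}=z,\,\tau(0)>\lceil n/2\rceil\bigr],
\end{equation*}
which reduces the problem to a one-endpoint estimate $\mathbb{P}_x[S_{n/2}=z,\tau(0)>n/2] \asymp a(x)a(z)/(n\ln^2 n)$ uniformly in $|z|\asymp\sqrt{n}$ of appropriate parity. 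Summing over the $\asymp n$ such $z$ reproduces the target, while contributions from $|z|\ll\sqrt{n}$ (small $a(z)$) and $|z|\gg\sqrt{n}$ (subgaussian decay of $\mathbb{P}_x[S_{n/2}=z]$) are negligible.

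The core of the argument is the one-endpoint avoidance LCLT, which I would establish by a two-sided hitting-time decomposition. Start by splitting at the first exit time $\sigma$ of a fixed small ball $B(r_0)$: by optional stopping the event $\{\sigma<\tau(0)\}$ has probability $\asymp a(x)/\ln n$ with a bounded harmonic-measure profile on $\partial B(r_0)$. From $\partial B(r_0)$ the ordinary LCLT for SRW delivers $\mathbb{P}_w[S_m = z] \asymp 1/n$ for $m\asymp n$, and a symmetric time-reversed argument, now pinning the last entrance of a fixed ball around $z$, supplies the companion factor $a(z)/\ln n$ from the remaining avoidance constraint. For the uniform upper bound \eqref{eq:uniform_up_bound_lclt}, reversibility of $\widehat S$ reduces to $a(x)\ge a(y)$, so the Doob factor $a(y)/a(x)\le 1$ and the classical bound $\mathbb{P}_x[S_n=y]\le C/n$ suffice in the moderate range; for $|y|$ much larger than $\sqrt{n}$ the subgaussian tail of $\mathbb{P}_x[S_n=y]$ absorbs the logarithmic Doob factor.

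The main obstacle is the one-endpoint avoidance LCLT itself: the entrance/exit harmonic measures and the ordinary LCLT must be composed so that the constants match both ways in $\asymp$, uniformly over $|x|$ from $1$ up to $\sqrt{Mn}$ and over all $|z|$ in the annulus where the walker can plausibly sit at time $n/2$. Getting a genuinely two-sided bound (not merely an upper one) when $|x|$ is comparable to $r_0$ is the delicate point, requiring careful use of the potential kernel at short range.
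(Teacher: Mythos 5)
Your opening reduction to the killed-SRW estimate $\mathbb{P}_x[S_n=y,\tau(0)>n]\asymp a(x)a(y)/(n\ln^2 n)$ via the Doob identity is correct, and the split at time $\lfloor n/2\rfloor$ together with reversibility of killed SRW is valid. However, this only shifts the burden to the \emph{one-endpoint avoidance LCLT}, and you explicitly acknowledge that this is ``the main obstacle'' and ``the delicate point'' without actually supplying the argument. That one-endpoint estimate with $1\le|x|\le\sqrt{Mn}$ and $|z|\asymp\sqrt{n}$ is essentially the hard content of the proposition; the sketch you give (split at the exit of a fixed ball $B(r_0)$, then time-reverse) does not by itself deliver the $1/\ln n$ factor -- note that for a fixed $r_0$, $\mathbb{P}_x[\sigma<\tau(0)]\asymp a(x)/a(r_0)$, which is $n$-independent -- and you do not explain how to convert your decomposition into a genuinely two-sided bound uniform in $|x|$ down to $1$. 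So the central lemma remains unproved. The paper instead bypasses any one-endpoint avoidance LCLT entirely: it first proves the two-endpoint bound directly for $|x|,|y|\ge n^{1/3}$ by a cone decomposition of the annulus $A$ and the one-dimensional reflection principle applied to $X\pm Y$ (where both avoidance and pinning become explicit), and then extends to small $|x|$ or $|y|$ by decomposing at $\htau(n^{1/3})$ and using reversibility of $\widehat S$. This is a genuinely different route that avoids the hard estimate you leave open.

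There is also a concrete error in your treatment of the uniform bound \eqref{eq:uniform_up_bound_lclt}. You assert that ``reversibility of $\widehat S$ reduces to $a(x)\ge a(y)$,'' but $\widehat S$ is reversible with respect to the measure $\mu_x=a(x)^2$, not the counting measure, so the exchange relation is $a(x)^2\,\mathbb{P}_x[\widehat{S}_n=y]=a(y)^2\,\mathbb{P}_y[\widehat{S}_n=x]$. If $a(y)>a(x)$, swapping $x$ and $y$ multiplies the bound you obtain by the \emph{unbounded} factor $a(y)^2/a(x)^2$, so you cannot reduce to the favourable case. Your remaining case split (moderate $|y|$ handled by Doob factor $\le1$, large $|y|$ by subgaussian decay) therefore leaves untreated the regime $1\le|x|\ll|y|\lesssim\sqrt{n}$, where the Doob factor $a(y)/a(x)$ is of order $\ln n$ and $\mathbb{P}_x[S_n=y]\asymp1/n$; closing that gap requires showing that the avoidance constraint actually shrinks $\mathbb{P}_x[S_n=y,\tau(0)>n]$ by the compensating $1/\ln n$, which is again the unproved one-endpoint estimate. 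In the paper the uniform bound is obtained by the $\htau(n^{1/3})$ decomposition, which reduces to starting points with $|x|\ge n^{1/3}$, where $a(y)/a(x)$ is bounded.
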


Proposition~\ref{prop:lclt} says, in particular, that the expected number
of visits to a fixed site up to a given time converges, but very slowly.
Of course it has to converge since an $\smash {\widehat {S}}$-walk is transient
by construction and we recall that expression~\eqref{eq:conditional_green_function}
on Proposition~\ref{prop:properties_hS}.\textit{(vi)} is an exact expression
for the Green function of $\smash {\widehat {S}}$. Also, notice that the
uniform upper bound on~\eqref{eq:uniform_up_bound_lclt} is a property that
$\smash {\widehat {S}}$-walks share with SRWs.

The proof of each of the claims on Theorem~\ref{teo:speed} is obtained
by some variation of the Borel-Cantelli lemma. We use both the first and
second Borel-Cantelli lemmas and also a generalization known as the Kochen-Stone
theorem. These applications rely on being able to control the position
of a conditioned walk after $n$ steps. Although the proof of~\eqref{eq:teo_speed_eventually}
is quite straightforward from choosing a well-suited sequence of scales,
the proof of~\eqref{eq:teo_speed_io} needs more involved arguments to control
dependencies on the sequence of events.

The proof of Theorem~\ref{teo:encounters} uses the second moment method
on a sequence of random variables that count the number of encounters during
some well-separated time scales and a conditional Borel-Cantelli lemma.
In order to exemplify the proof on a more classical setting, we prove the
analogous result for SRW using this technique on Proposition~\ref{prop:SRW_recurrent}.

This paper is organized as follows. We begin Section~\ref{sec:auxiliary_results}
by presenting some properties already known for
$\smash {\widehat {S}}$-walks. In Sections~\ref{sec:controlling_the_position}
and~\ref{sec:local_clt_and_related_results} we develop new auxiliary tools
for analyzing the position of an $\smash {\widehat {S}}$-walk after
$n$ steps. In Section~\ref{sec:local_clt_and_related_results} we prove
Proposition~\ref{prop:lclt} and in Section~\ref{sec:controlling_the_position}
we provide tail estimates for the probability that the walk is too close
to the origin or too far away from it. Theorems~\ref{teo:speed} and~\ref{teo:encounters}
are proved in Sections~\ref{sec:speed_of_transience} and~\ref{sec:encounters_of_two_walks},
respectively.

\medskip
\noindent
\textbf{Notational remarks.} Throughout the paper we use $c, C$ to denote
generic positive constants that can change from line to line. Also, our
asymptotic notation uses
\begin{itemize}
\setlength{\itemsep }{1.5pt} \setlength{\parskip }{0pt}
\setlength{\parsep }{0pt}
\item both $f = o(g)$ and $f \ll g$ to denote
$\lim _{n \to \infty } \tfrac{f(n)}{g(n)} = 0$;
\item $f = O(g)$ to denote $|f| \leqslant C |g|$ for some constant
$C$;
\item both $f = \Theta (g)$ and $f \asymp g$ to denote
$c |g| \leqslant |f| \leqslant C |g|$;
\item $f \sim g$ to denote
$\lim _{n \to \infty } \tfrac{f(n)}{g(n)} = 1$.
\end{itemize}
If such a constant or asymptotic expression depends on other parameters
it will be made explicit. We use $\#A$ to denote the cardinality of set
$A$.

\section{Auxiliary results}
\label{sec:auxiliary_results}

In this section we collect some estimates for SRW and
$\smash {\widehat {S}}$-walks from other sources. In Sections~\ref{sec:controlling_the_position}
and~\ref{sec:local_clt_and_related_results} we develop other auxiliary
results that we will need.

A first useful result is to formalize how close is the law of an
$\smash {\widehat {S}}$-walk from that of a SRW conditioned on not hitting
the origin. To begin with, notice that if we know the endpoints
$x, y \neq 0$ of an $n$-step walk then if we sum over all possible paths
we get
\begin{equation}
\label{eq:paths_fixed_endpoints}
\mathbb {P}_{x} [\smash {\widehat {S}}_{n} = y] = \frac{a(y)}{a(x)}
\cdot \mathbb {P}_{x} [S_{n} = y, \tau (0) > n].
\end{equation}

A statement that works for more general events is Lemma~3.3 of
\cite{comets2016two}. Let $x \in B(L) \setminus \{0\}$ and denote by
$\Gamma ^{(x)}_{L}$ the set of all nearest-neighbor paths starting at
$x$ and ending at $\partial B(L)$. For $A \subset \Gamma ^{(x)}_{L}$, we
abuse notation and write $S \in A$ to denote the event that
$(S_{0}, \ldots , S_{k}) \in A$ for some $k$. We start with the following
estimate.
\begin{lemma}[Lemma 3.3\textit{(i)} of~\cite{comets2016two}]
\label{lema:cond_walk_similar_srw_not_hit_zero}
For any $A \subset \Gamma _{L}^{(x)}$ we have that
\begin{equation}
\mathbb {P}_{x} [S \in A \mid \tau (0) > \tau (L)] = \mathbb {P}_{x} [
\smash {\widehat {S}}\in A] (1 + O((L \ln L)^{-1})).
\end{equation}
\end{lemma}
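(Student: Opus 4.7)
The plan is to compare the two walks path by path and then renormalise the totals. For any nearest-neighbour path $\pi = (x_0, \ldots, x_k)$ with $x_0 = x$, $x_k \in \partial B(L)$, and $x_i \neq 0$ for all $i$, the multiplicative form of $\widehat{P}$ telescopes:
\[
\mathbb{P}_x[\widehat{S} \text{ follows } \pi] = \prod_{i=0}^{k-1} \frac{a(x_{i+1})}{4 a(x_i)} = \frac{a(x_k)}{a(x)} \cdot 4^{-k} = \frac{a(x_k)}{a(x)}\, \mathbb{P}_x[S = \pi],
\]
and paths visiting the origin contribute zero on both sides (the $\widehat{S}$-transition into $0$ has weight $a(0) = 0$, and the event $\{\tau(0) > \tau(L)\}$ excludes them under the SRW measure). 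Summing over $\pi \in A$ gives
\[
\mathbb{P}_x[\widehat{S} \in A] = \sum_{\pi \in A} \frac{a(x_k(\pi))}{a(x)}\, \mathbb{P}_x[S = \pi,\, \tau(0) > \tau(L)].
\]

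Next I would show that $a(\cdot)$ is essentially constant on $\partial B(L)$. Any $y \in \partial B(L)$ satisfies $L - 1 < |y| \leq L$, so the asymptotic $a(z) = \frac{2}{\pi}\ln|z| + \text{const} + O(|z|^{-2})$ yields $a(y) = a(L) + O(1/L)$, and dividing by $a(L) \asymp \ln L$ gives
\[
\frac{a(y)}{a(L)} = 1 + O\!\left(\frac{1}{L \ln L}\right) \qquad \text{uniformly in } y \in \partial B(L).
\]
Pulling this common factor out of the previous sum,
\[
\mathbb{P}_x[\widehat{S} \in A] = \frac{a(L)}{a(x)}\bigl(1 + O((L \ln L)^{-1})\bigr)\, \mathbb{P}_x[S \in A,\, \tau(0) > \tau(L)].
\]

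The third ingredient is the normaliser $\mathbb{P}_x[\tau(0) > \tau(L)]$, which I would obtain from optional stopping applied to the bounded martingale $a(S_{n \wedge \tau(0) \wedge \tau(L)})$. Using $a(0) = 0$ and the a.s.\ finiteness of $\tau(0) \wedge \tau(L)$ (by recurrence of SRW),
\[
a(x) = \mathbb{E}_x\bigl[a(S_{\tau(L)})\, \ind_{\tau(L) < \tau(0)}\bigr] = a(L)\bigl(1 + O((L\ln L)^{-1})\bigr)\, \mathbb{P}_x[\tau(L) < \tau(0)],
\]
so $\mathbb{P}_x[\tau(L) < \tau(0)] = \frac{a(x)}{a(L)}\bigl(1 + O((L \ln L)^{-1})\bigr)$. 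Dividing the earlier display by this probability cancels the factor $a(L)/a(x)$, and the two error terms combine into a single $1 + O((L \ln L)^{-1})$, yielding the claim.

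None of the steps is technically difficult on its own; the point to keep track of is that the additive discrepancy $a(y) - a(L) = O(1/L)$ on $\partial B(L)$ produces the sharper \emph{multiplicative} error $O((L \ln L)^{-1})$ only after normalisation by $a(L) \asymp \ln L$, and that this same error expansion is used consistently for both the endpoint sum and the optional-stopping denominator so that the two ratios cancel cleanly.
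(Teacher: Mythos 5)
Your argument is correct, and it is the standard Doob $h$-transform computation: telescope the $\widehat{P}$-transition product to get the exact identity $\mathbb{P}_x[\widehat{S}\ \text{follows}\ \pi] = \tfrac{a(x_k)}{a(x)}\mathbb{P}_x[S=\pi]$ for paths avoiding $0$, observe that $a$ fluctuates only by $O(1/L)$ on $\partial B(L)$ so that the ratio to $a(L)\asymp\ln L$ carries a multiplicative error $O((L\ln L)^{-1})$, and cancel the resulting $a(L)/a(x)$ against the same factor coming from the optional-stopping evaluation of $\mathbb{P}_x[\tau(0)>\tau(L)]$. This is essentially the proof given in~\cite{comets2016two}, which the present paper merely cites; the only point worth flagging explicitly (and which your proof tacitly assumes) is that paths in $\Gamma_L^{(x)}$ are understood to hit $\partial B(L)$ only at their final step, so that the events $\{S=\pi\}$, $\pi\in A$, are disjoint and the path-by-path sum is an exact decomposition of $\mathbb{P}_x[S\in A,\ \tau(0)>\tau(L)]$.
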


Using Lemma~\ref{lema:cond_walk_similar_srw_not_hit_zero}, one can translate
estimates for SRW to estimates for $\smash {\widehat {S}}$-walk. The following
estimates for SRW are obtained using the Optional Stopping Theorem with
the martingale $a(S_{n \wedge \tau (0)})$.
\begin{lemma}[Lemma 3.1 of~\cite{comets2016two}]
\label{lema:srw_nothit_ball}
For all $x, y \in \mathbb {Z}^{2}$ and $L > 0$ with
$x \in B(y,L), |y| \leqslant L - 2$, we have
\begin{equation*}
\mathbb {P}_{x}\big[ \tau _{+}(0) > \tau _{+}\big(\partial B(y, L)
\big) \big] = \frac{a(x)}{a(L) + O\big(\frac{|y|\vee 1}{L}\big)}.
\end{equation*}
Moreover, for all $y \in B(n), x \in B(y, L)\setminus B(n)$ with
$n + |y| \leqslant L - 2$, we have
\begin{equation*}
\mathbb {P}_{x} \big[ \tau _{+}(n) > \tau _{+}\big(\partial B(y, L)
\big) \big] =
\frac{a(x) - a(n) +O(n^{-1})}{a(L) - a(n)
+ O\big(\frac{|y| \vee 1}{L} + n^{-1}\big)}.
\end{equation*}
\end{lemma}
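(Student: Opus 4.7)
Both identities rest on the potential-kernel martingale $a(S_{k\wedge \tau(0)})$, which is a martingale because $a$ is harmonic on $\mathbb{Z}^2\setminus\{0\}$ (as recalled in the introduction) and $a(0)=0$. The plan is to apply the Optional Stopping Theorem at a stopping time tailored to each identity, evaluate the resulting boundary terms using the expansion $a(z)=\tfrac{2}{\pi}\ln|z|+c'+O(|z|^{-2})$, and solve for the desired exit probability.

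For the first identity, set $\sigma:=\tau_+(0)\wedge \tau_+(\partial B(y,L))$. The walk stays in the finite set $B(y,L+1)$ until time $\sigma$, and recurrence of SRW forces $\sigma<\infty$ almost surely, while $a(S_{k\wedge\sigma})$ is uniformly bounded by $\max_{z\in B(y,L+1)}a(z)$, so OST yields
\[
a(x)=\mathbb{E}_x[a(S_\sigma)]=\mathbb{E}_x\bigl[a(S_\sigma)\,\ind\{\sigma=\tau_+(\partial B(y,L))\}\bigr],
\]
the contribution of $\{\sigma=\tau_+(0)\}$ vanishing because $a(0)=0$. Dividing by $\mathbb{P}_x[\sigma=\tau_+(\partial B(y,L))]$, the identity reduces to the uniform estimate $\mathbb{E}_x[a(S_\sigma)\mid \sigma=\tau_+(\partial B(y,L))]=a(L)+O((|y|\vee 1)/L)$ on the outer boundary.

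The second identity is obtained by the same argument with $\{0\}$ replaced by $B(n)$. With $\sigma':=\tau_+(\partial B(n))\wedge \tau_+(\partial B(y,L))$, the hypotheses $x\notin B(n)$ and $0\in B(n)$ (the latter because $|y|\leq n$) force the walk to enter $\partial B(n)$ before touching the origin, so $\sigma'\leq \tau(0)$ and the martingale property persists up to $\sigma'$. OST then gives $a(x)=\mathbb{E}_x[a(S_{\sigma'})]$. On $\{\sigma'=\tau_+(\partial B(n))\}$ one has $|S_{\sigma'}|\in(n-1,n]$, so $a(S_{\sigma'})=a(n)+O(1/n)$ directly from the logarithmic expansion; on the complementary event the same outer-boundary estimate as before gives $a(S_{\sigma'})=a(L)+O((|y|\vee 1)/L)$. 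Solving the resulting linear equation for $\mathbb{P}_x[\sigma'=\tau_+(\partial B(y,L))]$ produces the claimed ratio.

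The main obstacle is thus the uniform outer-boundary estimate $a(z)=a(L)+O((|y|\vee 1)/L)$ for $z$ with $|z-y|\in(L-1,L]$. When $|y|\ll L$ this is immediate, because $|z|=L(1+O(|y|/L))$ so the expansion of $a$ applies pointwise. When $|y|$ becomes comparable to $L$ the pointwise bound degrades, as values of $|z|$ near $L-|y|$ contribute a large correction to $\ln|z|$; one then averages $a(z)$ against the hitting distribution on $\partial B(y,L)$, exploiting the classical mean-value identity $\int_0^{2\pi}\ln|Le^{i\theta}-y|\,d\theta=2\pi\ln L$ valid for $|y|\leq L$, together with a Poisson-kernel comparison between the discrete harmonic measure induced by $S$ and its continuous analogue. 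This averaging is what makes the error uniform over the full allowed range $|y|\leq L-2$.
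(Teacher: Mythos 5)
The paper does not actually prove this lemma: it is imported verbatim from Comets--Popov--Vachkovskaia, with only the one-line remark that it is ``obtained using the Optional Stopping Theorem with the martingale $a(S_{n\wedge\tau(0)})$.'' Your proposal starts from exactly this martingale and correctly reduces the first identity to the outer-boundary estimate $\mathbb{E}_x\bigl[a(S_\sigma)\bigm|\sigma=\tau_+(\partial B(y,L))\bigr]=a(L)+O\bigl(\tfrac{|y|\vee 1}{L}\bigr)$, and the second identity to a two-term analogue. Up to that point you are on the same track as the paper's hint.

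The gap is in the step you flag as the ``main obstacle.'' The proposed Poisson-kernel averaging does not close it. First, the relevant exit measure is the harmonic measure of $B(y,L)\setminus\{0\}$ \emph{conditioned on escape}, not the Poisson kernel of $B(y,L)$; the conditioning wipes out mass near the part of $\partial B(y,L)$ close to $0$, and you give no quantitative control of that distortion. Second, even for the \emph{unconditioned} continuous Poisson kernel, the identity $\int P_x(z)\ln|z|\,dz=\ln L$ holds only when $x=y$; for general $x$ the harmonic extension of $\ln|\cdot|$ evaluated at $x$ equals $\ln L+\ln\bigl|1+(x-y)\bar y/L^2\bigr|$, which is of size up to $\ln\tfrac{L}{L-|y|}$ and is not $O\bigl(\tfrac{|y|\vee 1}{L}\bigr)$ uniformly for $|y|\leqslant L-2$. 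Indeed, if one takes $|y|=L-2$ and $x$ a lattice neighbour of the origin lying next to $\partial B(y,L)$ (e.g.\ $x=(-1,0)$, $y=(L-2,0)$), then $\mathbb{P}_x[\tau_+(0)>\tau_+(\partial B(y,L))]\geqslant 1/4$ (one step to $(-2,0)\in\partial B(y,L)$ already realizes the event), whereas the displayed formula with $a(x)=1$ gives $\Theta(1/\ln L)$. So the estimate as stated cannot hold in the full range $|y|\leqslant L-2$ without further restriction, and neither your pointwise bound nor the sketched averaging establishes the regime $|y|$ comparable to $L$; under a stronger hypothesis such as $|y|\leqslant L/2$, the pointwise expansion $a(z)=a(L)+\tfrac{2}{\pi}\ln\tfrac{|z|}{L}+O(L^{-2})$ for $z\in\partial B(y,L)$ already gives the $O(|y|/L)$ error, and the whole averaging apparatus is unnecessary.
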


Using Lemma~\ref{lema:cond_walk_similar_srw_not_hit_zero}, we can deduce
estimates for the $\smash {\widehat {S}}$-walk.
\begin{lemma}[Lemma 3.4 of~\cite{comets2016two}]
\label{lema:htau_r_before_htau_L}
For $r + 1 \leqslant |x| \leqslant L - 1$ we have that
\begin{equation*}
\mathbb {P}_{x}[\htau (r) > \htau (L)] = (1 + O(L^{-1}))
\frac{a(x) - a(r) + O(r^{-1})}{a(L) - a(r) + O(r^{-1})} \cdot
\frac{a(L)}{a(x)}.
\end{equation*}
Making $L \to \infty $, we get for $|x| \geqslant n + 1$ that
\begin{equation*}
\mathbb {P}_{x}[\htau (n) = \infty ] = 1 -
\frac{a(n) + O(n^{-1})}{a(x)}.
\end{equation*}
\end{lemma}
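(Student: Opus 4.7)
The plan is to transfer the computation to SRW and combine Lemmas~\ref{lema:cond_walk_similar_srw_not_hit_zero} and~\ref{lema:srw_nothit_ball}. Let $A \subset \Gamma^{(x)}_{L}$ be the family of nearest-neighbor paths starting at $x$ that first reach $\partial B(L)$ without previously hitting $\partial B(r)$; then $\mathbb{P}_{x}[\htau(r) > \htau(L)] = \mathbb{P}_{x}[\smash{\widehat{S}} \in A]$. Since $|x| \geqslant r + 1$, any such path remains in $B(L) \setminus B(r)$ throughout, and therefore avoids the origin, so $\{S \in A\} \subseteq \{\tau(0) > \tau(L)\}$. Applying Lemma~\ref{lema:cond_walk_similar_srw_not_hit_zero} yields
\[
\mathbb{P}_{x}[\htau(r) > \htau(L)] = \bigl(1 + O((L \ln L)^{-1})\bigr) \cdot \frac{\mathbb{P}_{x}[\tau(L) < \tau(r)]}{\mathbb{P}_{x}[\tau(0) > \tau(L)]}.
\]

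To evaluate the numerator, I would apply the optional stopping theorem to the martingale $a(S_{n \wedge \tau(0)})$ at the a.s.-finite time $\tau(r) \wedge \tau(L)$. Since the walk exits $B(L) \setminus B(r)$ through either $\partial B(r)$ or $\partial B(L)$, the asymptotic expression for the potential kernel gives $a = a(r) + O(r^{-1})$ or $a = a(L) + O(L^{-1})$ at the exit site; solving the resulting linear system together with $\mathbb{P}_{x}[\tau(r) < \tau(L)] + \mathbb{P}_{x}[\tau(L) < \tau(r)] = 1$ then gives
\[
\mathbb{P}_{x}[\tau(L) < \tau(r)] = \frac{a(x) - a(r) + O(r^{-1})}{a(L) - a(r) + O(r^{-1})},
\]
where the $O(L^{-1})$ boundary error can be absorbed into the $O(r^{-1})$ one since $r \leqslant L$. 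The denominator is read off directly from Lemma~\ref{lema:srw_nothit_ball} with $y = 0$: $\mathbb{P}_{x}[\tau(0) > \tau(L)] = a(x)/(a(L) + O(L^{-1}))$. Taking the ratio and absorbing the $O(L^{-1}/a(L))$ correction into the prefactor $(1 + O(L^{-1}))$ produces the claimed identity.

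For the $L \to \infty$ statement, monotone convergence yields $\mathbb{P}_{x}[\htau(n) > \htau(L)] \to \mathbb{P}_{x}[\htau(n) = \infty]$, and since $a(L) \to \infty$ the factor $a(L)/(a(L) - a(n) + O(n^{-1}))$ tends to $1$ while the $(1 + O(L^{-1}))$ prefactor disappears. What remains is $(a(x) - a(n) + O(n^{-1}))/a(x) = 1 - (a(n) + O(n^{-1}))/a(x)$, as asserted.

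The only real subtlety is the bookkeeping of error terms: four separate $O(\cdot)$ corrections arise (one from Lemma~\ref{lema:cond_walk_similar_srw_not_hit_zero}, one from Lemma~\ref{lema:srw_nothit_ball}, and two from evaluating $a$ on $\partial B(r)$ and $\partial B(L)$), and one must verify that these combine cleanly into the single $(1 + O(L^{-1}))$ factor for every $x$ with $r + 1 \leqslant |x| \leqslant L - 1$. No probabilistic ingredient beyond the $a$-martingale and the two cited lemmas should be needed.
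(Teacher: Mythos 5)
Your argument is correct and matches the expected derivation (the paper delegates the proof to \cite{comets2016two}, and your reconstruction uses exactly the two ingredients the paper imports for this purpose). The one inessential deviation is that you rederive $\mathbb{P}_{x}[\tau(L) < \tau(r)]$ via optional stopping of the $a$-martingale rather than reading it off from the second display of Lemma~\ref{lema:srw_nothit_ball} with $y=0$, $n=r$; these are the same computation. The only bookkeeping item worth flagging explicitly is the passage to the limit $L\to\infty$: you should note that $\{\htau(n)>\htau(L)\}$ is \emph{decreasing} in $L$ (not increasing), and that $\htau(L)\uparrow\infty$ a.s.\ since the conditioned walk is transient, so the intersection is $\{\htau(n)=\infty\}$; ``monotone convergence'' is the right idea but it is downward, which is the detail that needs the transience input.
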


\section{On the position of conditioned walks}
\label{sec:controlling_the_position}

Now, we build on the results of the previous section to deduce more precise
bounds on probabilities of certain events related to the position of an
$\smash {\widehat {S}}$-walk. The general idea is that it is quite close
to a SRW when the walk is far away from the origin.

We provide two lemmas that will be constantly used throughout the paper.
The first lemma gives an upper estimate on how large is the hitting time
of a disk of large radius.
\begin{lemma}[Hitting times]
\label{lema:estimate_htau_r}
For simple random walk there are constants $c_{0}, r_{0} >0$ such that
for $r \geqslant r_{0}$ and $t \geqslant r^{2}$ we have
\begin{equation}
\label{eq:tail_estimate_tau_r}
\max _{x \in B(r)} \mathbb {P}_{x} [\tau (r) > t] \leqslant e^{- c_{0}
t r^{-2}}.
\end{equation}
Moreover, consider an $\smash {\widehat {S}}$-walk started from
$x \in B(r) \setminus \{0\}$. There is $c_{1} > 0$ such that, for
$r \geqslant r_{0}$ and
$t \geqslant \tfrac{2}{c_{0}} \cdot r^{2} (\ln \ln r)$, it holds that
\begin{equation}
\label{eq:tail_estimate_htau_r}
\mathbb {P}_{x} [\htau (r) > t] \leqslant \frac{1}{a(x)} \cdot \exp
\Bigl[ - c_{1} \cdot \frac{t}{r^{2}}\Bigr].
\end{equation}
\end{lemma}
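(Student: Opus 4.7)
The plan is to prove \eqref{eq:tail_estimate_tau_r} by the standard iterated Markov property argument, and then to deduce \eqref{eq:tail_estimate_htau_r} from it by a change of measure based on the path-level version of \eqref{eq:paths_fixed_endpoints}.

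For the SRW bound, the key observation is that there exists $\rho<1$ (independent of $r$) such that $\sup_{x\in B(r)} \mathbb{P}_x[\tau(r)>r^2]\leqslant \rho$ for every $r\geqslant r_0$. This is classical: either one invokes the invariance principle, so that these probabilities converge, as $r\to\infty$, to the probability that a standard planar Brownian motion started anywhere in the unit disk stays inside it up to time $1$, which is strictly less than one; or one obtains the diffusive estimate $\mathbb{E}_x\tau(r)\leqslant Cr^2$ from optional stopping applied to $|S_n|^2-n$, together with Markov's inequality after enlarging the time window by a suitable constant. Given such $\rho$, the strong Markov property applied at the times $r^2,2r^2,\dots,kr^2$ yields $\sup_x \mathbb{P}_x[\tau(r)>kr^2]\leqslant \rho^k$. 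Writing $t=kr^2+s$ with $k=\lfloor t/r^2\rfloor\geqslant 1$ and $0\leqslant s<r^2$, so that $k\geqslant t/(2r^2)$, we conclude $\mathbb{P}_x[\tau(r)>t]\leqslant e^{-c_0 t/r^2}$ with $c_0=-\tfrac12\ln\rho$.

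For the $\hS$-walk bound, the starting point is the path-level analogue of \eqref{eq:paths_fixed_endpoints}: for every nearest-neighbor path $\gamma=(x_0,\dots,x_t)$ staying in $\mathbb{Z}^2\setminus\{0\}$, telescoping the transition probabilities gives $\mathbb{P}_{x_0}[\hS_{[0,t]}=\gamma]=\tfrac{a(x_t)}{a(x_0)}\,\mathbb{P}_{x_0}[S_{[0,t]}=\gamma]$. Summing this identity over all length-$t$ paths that stay inside $B(r)\setminus\partial B(r)$ (and hence automatically avoid $0$, since otherwise the weight $a(0)=0$ vanishes), grouped by endpoint $y$, yields
\[
\mathbb{P}_x[\htau(r)>t]=\frac{1}{a(x)}\sum_{y} a(y)\,\mathbb{P}_x[S_t=y,\ \tau(r)>t].
\]
Since only $y\in B(r)\setminus\partial B(r)$ contribute, the asymptotic expansion of $a$ gives $a(y)\leqslant C\ln r$ uniformly, so pulling this factor out and applying \eqref{eq:tail_estimate_tau_r} gives $\mathbb{P}_x[\htau(r)>t]\leqslant \tfrac{C\ln r}{a(x)}\,e^{-c_0 t/r^2}$.

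The last step is to absorb the spurious factor $\ln r$ into the exponential, which is exactly where the hypothesis $t\geqslant \tfrac{2}{c_0}r^2\ln\ln r$ enters. Choose any $c_1\in(0,c_0/2)$ and split $e^{-c_0 t/r^2}=e^{-c_1 t/r^2}\cdot e^{-(c_0-c_1)t/r^2}$; under the hypothesis on $t$ the second factor is at most $(\ln r)^{-2(c_0-c_1)/c_0}$, with exponent strictly greater than $1$, and for $r$ larger than some $r_0$ this kills the $C\ln r$, leaving $\mathbb{P}_x[\htau(r)>t]\leqslant \tfrac{1}{a(x)}e^{-c_1 t/r^2}$. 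The main (and essentially only) subtlety is this last bookkeeping step: the $\ln r$ loss in the change of measure is unavoidable because the $h$-transform re-weights endpoints by $a(y)$, and this is precisely what forces the threshold $r^2\ln\ln r$ in the hypothesis, as opposed to the diffusive scale $r^2$ that suffices for SRW.
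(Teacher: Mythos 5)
Your proof is correct and reaches the same intermediate estimate as the paper, but it packages the change of measure for the second display in a slightly different way. For \eqref{eq:tail_estimate_tau_r} the argument is essentially identical to the paper's: both iterate the Markov property over diffusive time blocks of length $\asymp r^2$, using the CLT (or, in your alternative, optional stopping for $|S_n|^2-n$) to produce a fixed per-block escape probability $1-\rho$. For \eqref{eq:tail_estimate_htau_r}, the paper transfers the $\hS$-event to the SRW conditioned on $\{\tau(0)>\tau(r)\}$ via Lemma~\ref{lema:cond_walk_similar_srw_not_hit_zero}, then bounds $\mathbb{P}_x[\tau(0)>\tau(r)]^{-1}\sim\tfrac{2\ln r}{\pi a(x)}$ by Lemma~\ref{lema:srw_nothit_ball}; you instead sum the path-level Radon--Nikodym identity $\mathbb{P}_x[\hS_{[0,t]}=\gamma]=\tfrac{a(x_t)}{a(x)}\mathbb{P}_x[S_{[0,t]}=\gamma]$ over paths confined to the interior of $B(r)$, extracting the same $O(\ln r)/a(x)$ factor directly from the terminal weight $a(y)\leq C\ln r$. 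Your route is more self-contained (it bypasses the two auxiliary lemmas and the $1+O(r^{-1})$ error term), while the paper's re-uses already established comparison machinery. One small imprecision: your displayed identity should either carry the additional constraint $\tau(0)>t$ inside the SRW event, or be written with $\leq$ rather than $=$, since SRW paths through the origin have positive probability but zero $\hS$-weight; as you only need an upper bound, this is harmless. The absorption of the $\ln r$ factor via the hypothesis $t\geq\tfrac{2}{c_0}r^2\ln\ln r$ is the same bookkeeping as in the paper (the paper fixes $c_1=c_0/3$; you correctly note any $c_1<c_0/2$ works).
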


\begin{proof}
Inequality~\eqref{eq:tail_estimate_tau_r} is shown using a coin-tossing
argument: for the event $\{\tau (r) > t\}$ to occur, it is necessary that
the walker fails to leave the disk $B(r)$ in each of several non-overlapping
time intervals. Indeed, let us divide the time interval $[0, t]$ into intervals
\begin{equation*}
\text{$I_{j} = [a_{j}, a_{j+1}]$ with
$a_{j} = (j - 1)\lfloor r^{2} \rfloor $,}
\end{equation*}
obtaining
$\bigl\lfloor \tfrac{t}{\lfloor r^{2} \rfloor } \bigr\rfloor $ intervals
(discard the last one if its length is less than
$\lfloor r^{2} \rfloor $). Notice that events
$A_{j} := \{\max _{u \in I_{j}} |S_{u}| < r \}$ satisfy
\begin{align*}
\max _{x} \mathbb {P}_{x}[\cap _{j=1}^{k} A_{j}] &= \max _{x}
\mathbb {E}_{x}[ \ind _{A_{1}} \mathbb {P}_{x}[ \{\cap _{j=1}^{k-1} A_{j}
\} \circ \theta _{a_{2}} \mid \mathcal {F}_{a_{2}} ]]
\\
&\leqslant \max _{x} \mathbb {P}_{x}[A_{1}] \max _{z} \mathbb {P}_{z}[
\cap _{j=1}^{k-1} A_{j}] \leqslant \ldots \leqslant \left (\max _{x}
\mathbb {P}_{x}[A_{1}]\right )^{k},
\end{align*}
where the maximums are on the disk $B(r)$ and we used Markov property at
time $a_{2}$. Moreover, we have
\begin{align*}
\max _{x} \mathbb {P}_{x}[A_{1}] &\leqslant \max _{x} \mathbb {P}_{x}
\bigl[ |S_{\lfloor r^{2} \rfloor }| < r\bigr] \leqslant \mathbb {P}_{0}
\bigl[ |S_{\lfloor r^{2} \rfloor }| < 3r\bigr]
\\
&= \mathbb {P}_{0} \Bigl[S_{\lfloor r^{2} \rfloor } \in B(3r)\Bigr]
\leqslant 1 - c_{0} < 1
\end{align*}
for large $r$ by the Central Limit Theorem. Thus, we can bound
\begin{equation*}
\max _{x \in B(r)} \mathbb {P}_{x} [\tau (r) > t] \leqslant \max _{x
\in B(r)} \mathbb {P}_{x} \bigl[\cap _{j=1}^{\lfloor t /\lfloor r^{2}
\rfloor \rfloor } A_{j}\bigr] \leqslant (1 - c_{0})^{\lfloor t /
\lfloor r^{2} \rfloor \rfloor } \leqslant e^{-c_{0} tr^{-2}}.
\end{equation*}

To get~\eqref{eq:tail_estimate_htau_r}, we just apply Lemmas~\ref{lema:cond_walk_similar_srw_not_hit_zero}
and \ref{lema:srw_nothit_ball}, obtaining
\begin{align*}
\mathbb {P}_{x} [\htau (r) > t] &= \mathbb {P}_{x}\big[\max _{u
\leqslant t} |\smash {\widehat {S}}_{u}| < r\big]
\\
&= \mathbb {P}_{x}\big[\max _{u \leqslant t} | S_{u}| < r \mid \tau (0)
> \tau (r)\big] (1 + O(r^{-1}))
\\
&\leqslant \mathbb {P}_{x}[\max _{u \leqslant t} |S_{u}| < r] \cdot
\mathbb {P}_{x} [\tau (0) > \tau (r)]^{-1} (1 + O(r^{-1}))
\\
&\leqslant e^{- c_{0} t r^{-2}} \cdot \frac{\ln r}{a(x)} \cdot (
\tfrac{2}{\pi } + o(1))
\\
&\leqslant a(x)^{-1} \cdot \exp \Bigl[ - c_{0} \cdot \frac{t}{r^{2}}
\Bigl( \frac{1}{2} + O\bigl(\ln ^{-1} \ln r\bigr) \Bigr) \Bigr]
\end{align*}
where in the last inequality we used that
$t \geqslant \tfrac{2}{c_{0}} \cdot r^{2} (\ln \ln r)$. Take
$c_{1} = \tfrac{c_{0}}{3}$ and increase $r_{0}$ if needed to ensure
$\tfrac{1}{2} + O(\ln ^{-1} \ln r) > \tfrac{1}{3}$.
\end{proof}

Since we expect that an $\smash {\widehat {S}}$-walk is close to a SRW,
it is reasonable to expect that after $n$ steps it is at distance roughly
$\sqrt{n}$. Using Lemma~\ref{lema:estimate_htau_r} we estimate the probability
of deviating too much from this.
\begin{lemma}
\label{lema:hS_in_extremes}
Let $x\in \mathbb {Z}^{2}\setminus \{0\}$ with $|x|\leqslant r$. Suppose
\begin{equation*}
n^{1/4} \leqslant r \leqslant n^{1/2} \text{ for all } n \quad
\text{and} \quad r \ll \sqrt{ \frac{n}{\ln \ln n} } \quad
\text{as $n \to \infty $}.
\end{equation*}
Choose any $f$ satisfying
$\tfrac{2}{c_{0}} \cdot \ln \ln r \leqslant f \ll \tfrac{n}{r^{2}}$ with
$c_{0}$ given by Lemma~\ref{lema:estimate_htau_r}. If
$u \geqslant 3 r$, there is $n_{0} > 0$ such that for
$n \geqslant n_{0}$ we have
\begin{equation*}
\label{eq:hS_greater_u}
\mathbb {P}_{x} \big[ |\smash {\widehat {S}}_{n}| > u \big]
\leqslant \frac{1}{a(x)} \cdot \exp \Bigl[ - c_{1} \cdot f \Bigr] + c_{2}
\exp \Bigl[ - \frac{u^{2}}{3n} \Bigr] + \frac{c_{3}}{\sqrt{n}}
\end{equation*}
for positive universal constants $c_{1},c_{2},c_{3}$. Also, for
$r \leqslant \tfrac{1}{3} l$ and $n \geqslant n_{0}$ we have a universal
constant $c_{4}>0$ such that
\begin{equation*}
\label{eq:hS_smaller_l}
\mathbb {P}_{x} \big[ |\smash {\widehat {S}}_{n}| < l \big]
\leqslant \frac{1}{a(x)} \cdot \exp \Bigl[ - c_{1} \cdot f \Bigr] + c_{4}
\cdot \frac{l^{2}}{n}.
\end{equation*}
\end{lemma}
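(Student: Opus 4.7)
The proof decomposes both probabilities according to whether the walk has reached $\partial B(r)$ by time $fr^{2}$. Since our hypothesis $f \geqslant \tfrac{2}{c_{0}}\ln\ln r$ gives $fr^{2} \geqslant \tfrac{2}{c_{0}} r^{2}\ln\ln r$, Lemma~\ref{lema:estimate_htau_r} applies and yields $\mathbb{P}_{x}[\htau(r) > fr^{2}] \leqslant \tfrac{1}{a(x)} \exp[-c_{1} f]$, which is the first term in both estimates. On the complementary event $\{\htau(r) \leqslant fr^{2}\}$ the plan is to apply the strong Markov property at $\htau(r)$: the walk restarts from some $Y \in \partial B(r)$ (so $r-1 < |Y| \leqslant r$), and since $fr^{2} \ll n$, the remaining time $m := n - \htau(r)$ satisfies $m \geqslant n/2$ once $n$ is large. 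The task reduces to proving the target remainders for $\mathbb{P}_{Y}[|\hS_{m}| > u]$ and $\mathbb{P}_{Y}[|\hS_{m}| < l]$ uniformly in $Y \in \partial B(r)$ and $m \in [n/2, n]$.

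The main tool for these reduced estimates is identity~\eqref{eq:paths_fixed_endpoints}, which after summing in $y$ becomes
\begin{equation*}
\mathbb{P}_{Y}[\hS_{m} \in A] \leqslant \frac{1}{a(Y)}\,\mathbb{E}_{Y}\big[a(S_{m})\,\ind\{S_{m} \in A\}\big]
\end{equation*}
for any $A \subset \mathbb{Z}^{2}$. For the upper tail ($A = \{|y| > u\}$) I would split at $R := \sqrt{3n \ln n}$. On the inner region $u < |S_{m}| \leqslant R$, using $a(S_{m}) \leqslant a(R)$ and the uniform bound $a(R)/a(Y) = O(1)$ (valid since $|Y| > r - 1 \geqslant \tfrac12 n^{1/4}$ for large $n$, while $\ln R \leqslant \tfrac12 \ln n + o(\ln n)$), the expression reduces to a multiple of $\mathbb{P}_{Y}[|S_{m}| > u]$. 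A sharp Gaussian tail estimate for planar SRW, whose moderate-deviation rate is $e^{-u^{2}/m}$ (not $e^{-u^{2}/(2m)}$), combined with $|Y| \leqslant r \leqslant u/3$ and $m \leqslant n$, bounds this by $c_{2} \exp[-u^{2}/(3n)]$. On the outer region $|S_{m}| > R$, Cauchy--Schwarz yields
\begin{equation*}
\mathbb{E}_{Y}\big[a(S_{m})\,\ind\{|S_{m}| > R\}\big] \leqslant \sqrt{\mathbb{E}_{Y}[a(S_{m})^{2}]}\,\sqrt{\mathbb{P}_{Y}[|S_{m}| > R]},
\end{equation*}
where the first factor is $O(\ln n)$ by tail-integrating $a(S_{m})^{2}$ against the SRW Gaussian bound, and the second is $O(1/\sqrt n)$ by that same bound evaluated at $R$; after dividing by $a(Y) \asymp \ln n$ this contributes exactly $c_{3}/\sqrt n$.

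For the lower tail ($A = \{|y| < l\}$) the monotone bound $a(S_{m}) \leqslant a(l)$ gives $\mathbb{P}_{Y}[|\hS_{m}| < l] \leqslant (a(l)/a(Y))\,\mathbb{P}_{Y}[|S_{m}| < l]$, where the ratio is $O(1)$ (one may assume $l \leqslant \sqrt{n}$, otherwise the target $c_{4}l^{2}/n \geqslant 1$ is trivial). Then the standard local-CLT upper bound $\mathbb{P}_{Y}[S_{m} = y] \leqslant C/m$ for planar SRW, summed over the $O(l^{2})$ sites in $B(l)$, gives $\mathbb{P}_{Y}[|S_{m}| < l] \leqslant Cl^{2}/m \leqslant 2Cl^{2}/n$, as required. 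The main obstacle is the $c_{3}/\sqrt n$ remainder in the upper tail: the ratio $a(y)/a(Y)$ is not uniformly bounded for distant endpoints, and the Cauchy--Schwarz split at $R \asymp \sqrt{n \ln n}$ is the correct device to balance the logarithmic growth of the potential kernel against the Gaussian decay of the SRW heat kernel.
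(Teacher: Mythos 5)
Your overall architecture matches the paper: decompose at the stopping time $\htau(r)$ with $t = fr^2$, control the first term via Lemma~\ref{lema:estimate_htau_r}, and reduce the second to a tail bound for SRW started from $\partial B(r)$ run for $m \in [n/2, n]$ steps. The paper, however, handles that reduction via Lemma~\ref{lema:cond_walk_similar_srw_not_hit_zero}, which gives directly $\mathbb{P}_{z}[\,|\smash{\widehat{S}}_n| > u\,] \leqslant (4+o(1))\,\mathbb{P}_{z}[\,|S_n| > u\,]$ (using $\mathbb{P}_z[\tau(0) > \tau(n+r)] = a(z)/a(n+r) \asymp 1/4$ since $|z| = r \geqslant n^{1/4}$), after which Bentkus's Berry--Esseen estimate finishes both tails. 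You instead invoke identity~\eqref{eq:paths_fixed_endpoints}, sum over endpoints, and arrive at $\mathbb{P}_Y[\smash{\widehat{S}}_m \in A] \leqslant a(Y)^{-1}\,\mathbb{E}_Y[a(S_m)\ind\{S_m \in A\}]$, which is a legitimate alternative route.

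The issue is in how you control $a(S_m)$ on the outer region. First, the Cauchy--Schwarz step as written contains a gap: you claim $\sqrt{\mathbb{P}_Y[|S_m| > R]} = O(n^{-1/2})$, which would require $\mathbb{P}_Y[|S_m| > R] = O(n^{-1})$; but if the bound on $\mathbb{P}_Y[|S_m| > R]$ comes from Berry--Esseen (``that same bound''), the additive $O(m^{-1/2})$ error dominates and only gives $\mathbb{P}_Y[|S_m| > R] = O(n^{-1/2})$, hence $\sqrt{\cdot} = O(n^{-1/4})$ --- too weak. To make the outer region close with Cauchy--Schwarz you would need a genuine exponential tail bound (Azuma/Hoeffding) free of polynomial error. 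Second, and more to the point, the split is unnecessary: since $|Y| \leqslant r$ and the SRW takes $m \leqslant n$ steps, one has $S_m \in B(r+n) \subset B(2n)$ almost surely, so $a(S_m) \leqslant a(2n) = \tfrac{2}{\pi}\ln n + O(1)$ deterministically. Combined with $a(Y) \geqslant \tfrac{2}{\pi}\cdot\tfrac{1}{4}\ln n + O(1)$ (from $|Y| \geqslant r - 1 \geqslant n^{1/4} - 1$), the ratio $a(2n)/a(Y)$ is bounded, and you obtain $\mathbb{P}_Y[\smash{\widehat{S}}_m \in A] \leqslant C\,\mathbb{P}_Y[S_m \in A]$ in one line --- precisely the intermediate inequality the paper proves with its conditioning lemma. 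With that replacement your argument is correct and essentially parallels the paper's; the Cauchy--Schwarz device adds complexity and, as written, a gap.
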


\begin{proof}
All paths of conditioned walk till time $n$ must be inside
$B(n + r)$. We decompose our event with respect to stopping time
$\htau (r)$ the walk hits $\partial B(r)$, using Lemma~\ref{lema:estimate_htau_r}.
We can write
\begin{align*}
\mathbb {P}_{x} \bigl[ |\smash {\widehat {S}}_{n}| > u \bigr] &
\leqslant \mathbb {P}_{x} \bigl[ \htau (r) > t\bigr] + \mathbb {P}_{x}
\bigl[ |\smash {\widehat {S}}_{n}| > u, \htau (r) \leqslant t\bigr]
\\
&\leqslant \frac{1}{a(x)} \cdot e^{- c_{1} t r^{-2}} + \max _{
\substack{
z \in \partial B(r) \\
1 \leqslant j \leqslant t
}} \mathbb {P}_{z} \bigl[ |\smash {\widehat {S}}_{n-j}| > u \bigr].
\end{align*}
for any $t$ satisfying
$t \geqslant \tfrac{2}{c_{0}} \cdot r^{2} (\ln \ln r)$ and
$r \geqslant r_{0}$. Since $r \geqslant n^{1/4}$ we just need that
$n \geqslant n_{0}$, a universal constant. Taking $t = fr^{2}$, we obtain
\begin{equation*}
\mathbb {P}_{x} \bigl[ \htau (r) > t\bigr] \leqslant \exp \Bigl[ - c_{1}
\cdot f \Bigr] \cdot a(x)^{-1}.
\end{equation*}
Notice also that there is such an $f$ with $r^{2} f \ll n$. Indeed, the
hypothesis that $r \ll \sqrt{n} \cdot \ln ^{-1/2} \ln n$ implies
$r^{2} (\ln \ln r) \ll n$ as $n \to \infty $ and thus we can find an intermediary
$f$ satisfying the hypotheses. Now, for $z\in \partial B(r)$ we can compare
this event with SRW. Noticing that
$a(z) = \tfrac{2}{\pi } \ln r + O(1)$ and $r \in [n^{1/4}, n^{1/2}]$, we
have by Lemmas~\ref{lema:cond_walk_similar_srw_not_hit_zero} and~\ref{lema:srw_nothit_ball}
\begin{align*}
\mathbb {P}_{z} \bigl[ |\smash {\widehat {S}}_{n}| > u \bigr] &=
\mathbb {P}_{z} \bigl[ |S_{n}| > u \mid \tau (0) > \tau (n + r)
\bigr] \cdot (1 + O(n^{-1}))
\\
&\leqslant \mathbb {P}_{z} \bigl[ |S_{n}| > u \bigr] \cdot
\mathbb {P}_{z} \bigl[\tau (0) > \tau (n + r)\bigr]^{-1} \cdot (1 + O(n^{-1}))
\\
&= \mathbb {P}_{z} \bigl[ |S_{n}| > u \bigr] \cdot
\frac{a(n + r) + O\big( n^{-1} \big)}{a(z)} \cdot (1 + O(n^{-1}))
\\
&\leqslant (4 + o(1)) \cdot \mathbb {P}_{z} \bigl[ |S_{n}| > u \bigr].
\end{align*}
To estimate the corresponding quantity for SRW, we apply a Berry-Esseen
type of estimate for multidimensional walks from Bentkus
\cite{bentkus2005lyapunov}. For SRW, it states that uniformly on all convex
sets $A \subset \mathbb {R}^{2}$ we have
\begin{equation}
\label{eq:Bentkus_Berry_Esseen}
\mathbb {P}_{0} \big[ S_{n} \in A \big] = \mathbb {P}\big[
\mathcal {N}(0, \Sigma _{n}) \in A \big] + O(n^{- \frac{1}{2}})
\end{equation}
where
$\Sigma _{n} = \tfrac{n}{2} \big[ \substack{ 1 \\ 0 } \,
\substack{ 0 \\ 1 } \big]$ is the covariance matrix of $S_{n}$. Thus, using
$A = B(u)$ leads to
\begin{equation*}
\label{eq:Bentkus_Berry_Esseen_ball}
\mathbb {P}_{0} \big[ |S_{n}| > u \big] = 1 - \mathbb {P}\big[ |
\mathcal {N}(0, \Sigma _{n})| \leqslant u \big] + O(n^{-\frac{1}{2}}) =
e^{-\frac{u^{2}}{n}} + O(n^{-\frac{1}{2}}).
\end{equation*}
For a walk started from $z$, we notice that
\begin{align*}
\mathbb {P}_{z} \big[ |S_{n}| > u \big] &\leqslant \mathbb {P}_{0}
\big[ |S_{n}| > u - |z| \big] \leqslant \exp \biggl[ -\frac{u^{2}}{n}
\cdot \biggl( 1 - 2 \cdot \frac{r}{u} \biggr) \biggr] + O(n^{-
\frac{1}{2}}).
\end{align*}
Putting together all the bounds above, we get
\begin{align*}
\mathbb {P}_{x} \bigl[ |\smash {\widehat {S}}_{n}| > u \bigr] &
\leqslant \exp \Bigl[ - c_{1} \cdot f \Bigr] + (4 + o(1)) \exp
\biggl[ -\frac{u^{2}}{3n} \biggr] + O(n^{-\frac{1}{2}})
\end{align*}
and conclude the proof of the first result. The same idea is applied for
the second inequality since applying Lemma~\ref{lema:cond_walk_similar_srw_not_hit_zero}
to event $\{|\smash {\widehat {S}}_{n}| < l\}$ leads to
\begin{align*}
\mathbb {P}_{z} \bigl[ |\smash {\widehat {S}}_{n}| < l \bigr] &=
\mathbb {P}_{z} \bigl[ |S_{n}| < l \mid \tau (0) > \tau (n + r)
\bigr] \cdot (1 + o(1))
\\
&\leqslant (4 + o(1)) \cdot \mathbb {P}_{z} \bigl[ |S_{n}| < l \bigr].
\end{align*}
Applying \eqref{eq:Bentkus_Berry_Esseen} once again, notice that now it
holds for $r \leqslant \tfrac{1}{3} l$ that
\begin{align*}
\mathbb {P}_{z} \big[ |S_{n}| < l \big] &\leqslant \mathbb {P}_{0}
\big[ |S_{n}| < l + |z| \big] \leqslant 1 - \exp \Bigl[ -
\frac{l^{2}}{n} \Bigl( 1 + \frac{r}{l} \Bigr)^{2} \Bigr] + O(n^{-
\frac{1}{2}})
\\
&\leqslant 1 - \exp \Bigl[- \frac{16l^{2}}{9n}\Bigr] + O(n^{-
\frac{1}{2}}) = O \Bigl( \frac{l^{2}}{n} \Bigr)
\end{align*}
since
$\tfrac{l^{2}}{n} \geqslant \tfrac{9r^{2}}{n} \geqslant
\tfrac{9}{\sqrt{n}}$. Analogously to the previous case, we obtain
\begin{align*}
\mathbb {P}_{x} \big[ |\smash {\widehat {S}}_{n}| < l \big] &
\leqslant \frac{1}{a(x)} \cdot \exp \Bigl[ - c_{1} \cdot f \Bigr] + c_{4}
\cdot \frac{l^{2}}{n}. \qedhere
\end{align*}
\end{proof}

\begin{corollary}
\label{coro:hS_extremes_simple}
In the same setting of Lemma~\ref{lema:hS_in_extremes}, if
$r \ll n^{\beta }$ for some $\beta < \tfrac{1}{2}$, then as
$n \to \infty $ we have
\begin{equation*}
\label{eq:hS_extremes_simple}
\begin{split} \mathbb {P}_{x} \big[ |\smash {\widehat {S}}_{n}| > u
\big] &= O \Bigl( e^{ - c \cdot \frac{u^{2}}{n} } +
\tfrac{1}{\sqrt{n}} \Bigr)
\\
\mathbb {P}_{x} \big[ |\smash {\widehat {S}}_{n}| < l \big] &= O
\Bigl( \tfrac{l^{2}}{n} \Bigr).
\end{split}
\end{equation*}
\end{corollary}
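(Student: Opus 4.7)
The plan is to specialize the two inequalities of Lemma~\ref{lema:hS_in_extremes} by making an explicit choice of the auxiliary function $f$ that becomes available once we strengthen the growth condition on $r$ from $r\ll\sqrt{n/\ln\ln n}$ to the polynomial bound $r\ll n^{\beta}$ with $\beta<\tfrac{1}{2}$. Under this stronger hypothesis we have $n/r^{2}\gg n^{1-2\beta}$, which grows polynomially in $n$, so there is plenty of room to take $f$ of only logarithmic order.

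Concretely, I would fix a constant $C>0$ large enough that $c_{1}C\geqslant \tfrac{1}{2}$ (where $c_{1}$ is the constant supplied by Lemma~\ref{lema:estimate_htau_r}) and set $f=C\ln n$. Then the required admissibility conditions are easy to verify:
\begin{itemize}
\item $f\geqslant \tfrac{2}{c_{0}}\ln\ln r$ holds for $n$ large, since $\ln n\gg \ln\ln r$;
\item $f\ll n/r^{2}$ holds because $n/r^{2}\gg n^{1-2\beta}$ is polynomial in $n$ while $f=C\ln n$ is logarithmic.
\end{itemize}
With this choice,
\[
\frac{1}{a(x)}\exp[-c_{1}f]\leqslant \exp[-c_{1}C\ln n]\leqslant n^{-1/2},
\]
using $a(x)\geqslant a(1)=1$.

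Plugging this into the first bound of Lemma~\ref{lema:hS_in_extremes} immediately yields
\[
\mathbb{P}_{x}\big[|\hat S_{n}|>u\big]\leqslant n^{-1/2}+c_{2}\exp\!\Big[-\tfrac{u^{2}}{3n}\Big]+\tfrac{c_{3}}{\sqrt{n}}=O\!\left(e^{-cu^{2}/n}+\tfrac{1}{\sqrt{n}}\right),
\]
which is the first claim. For the second bound the same substitution gives
\[
\mathbb{P}_{x}\big[|\hat S_{n}|<l\big]\leqslant n^{-1/2}+c_{4}\cdot\tfrac{l^{2}}{n}.
\]
Here the key observation — and essentially the only place one has to think — is that the $n^{-1/2}$ term is absorbed into $O(l^{2}/n)$. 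This uses the lower bound $r\geqslant n^{1/4}$ inherited from the setting of Lemma~\ref{lema:hS_in_extremes} together with the hypothesis $l\geqslant 3r$ of that lemma's second inequality: these imply $l^{2}/n\geqslant 9r^{2}/n\geqslant 9/\sqrt{n}$, so $n^{-1/2}=O(l^{2}/n)$ and the second estimate collapses to $O(l^{2}/n)$, as claimed.

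There is no substantive obstacle here; the corollary is essentially a book-keeping consequence of Lemma~\ref{lema:hS_in_extremes}, with the only nontrivial point being the verification that the error term $n^{-1/2}$ in the second estimate is dominated by $l^{2}/n$ under the standing assumption $r\geqslant n^{1/4}$.
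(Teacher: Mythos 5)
Your proof is correct and follows essentially the same route as the paper: both specialize Lemma~\ref{lema:hS_in_extremes} by choosing an admissible $f$ whose contribution is dominated by the remaining terms, the paper taking $f = n^{1/2-\beta}$ (so $\exp[-c_1 f]$ is super-polynomially small) while you take the equally valid and somewhat more economical $f = C\ln n$. Your explicit verification that $n^{-1/2} = O(l^2/n)$ via $l \geqslant 3r \geqslant 3n^{1/4}$ is a detail the paper leaves implicit, and your use of $a(x)\geqslant 1$ for $x\neq 0$ to drop the $1/a(x)$ prefactor is also correct.
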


\begin{proof}
Just notice that we can take $f = n^{1/2 - \beta }$ and for this choice
of $f$ we have
\begin{equation*}
\exp \bigl[ - c_{1} \cdot f \bigr] \ll n^{-1/2}. \qedhere
\end{equation*}
\end{proof}

\section{Local central limit theorem}
\label{sec:local_clt_and_related_results}

In this section we prove Proposition~\ref{prop:lclt}, our version of local
CLT for $\smash {\widehat {S}}$. It provides some finer estimates for the
position of an $\smash {\widehat {S}}$-walk after $n$ steps.

\begin{proof}[Proof of Proposition~\ref{prop:lclt}]
First of all, consider the time when the walk hits
$\partial B(n^{1/3})$. If $|x| < n^{1/3}$ we know by Lemma~\ref{lema:estimate_htau_r}
that for any $\delta > 0$ we have
\begin{equation*}
\label{eq:hS_confined_too_long}
\mathbb {P}_{x} [\htau (n^{1/3}) > n^{2/3 + \delta }] \leqslant e^{ - c
n^{\delta } } \ll \frac{1}{n \ln ^{2} n}.
\end{equation*}
Applying the Markov property to the stopping time
$\htau = \htau (n^{1/3})$ we can write
\begin{align}
\mathbb {P}_{x} \big[ \smash {\widehat {S}}_{n} = y \big] &=
\mathbb {P}_{x} \big[ \smash {\widehat {S}}_{n} = y, \htau < n^{2/3 +
\delta } \big] + \mathbb {P}_{x} \big[ \smash {\widehat {S}}_{n} = y,
\htau \geqslant n^{2/3 + \delta } \big]
\nonumber
\\
\label{eq:htau_decomposition}
&= \mathbb {E}_{x} \Big[
\hspace{-2mm}
\sum _{
\substack{
1\leqslant j \leqslant n^{2/3+\delta } \\
z \in \partial B(n^{1/3})
}}
\hspace{-3mm}
\ind \{ \htau = j, \smash {\widehat {S}}_{j} = z\} \mathbb {P}_{z}
\big[ \smash {\widehat {S}}_{n-j} = y \big] \Big] + O(e^{- c n^{
\delta }}).
\end{align}
This decomposition allows us to break the proof into cases, considering
how $|x|$ and $|y|$ compare to $n^{1/3}$.

\medskip
\noindent
\textbf{Uniform upper bound.} If $|x| < n^{1/3}$ we have from~\eqref{eq:htau_decomposition}
that
\begin{equation*}
\mathbb {P}_{x} [\smash {\widehat {S}}_{n} = y] \leqslant e^{- c n^{
\delta }} + \max _{
\substack{
z \in \partial B(n^{1/3}) \\
1 \leqslant j \leqslant n^{2/3 + \delta }
}} \mathbb {P}_{z} [\smash {\widehat {S}}_{n-j} = y].
\end{equation*}
Since $j \ll n$, it suffices to prove~\eqref{eq:uniform_up_bound_lclt}
for $|x| \geqslant n^{1/3}$. Recall relation~\eqref{eq:paths_fixed_endpoints}
and notice
\begin{equation*}
\mathbb {P}_{x} \bigl[\smash {\widehat {S}}_{n} = y\bigr] =
\frac{a(y)}{a(x)} \cdot \mathbb {P}_{x} [S_{n} = y, \tau (0) > n]
\leqslant \frac{a(y)}{a(x)} \cdot \mathbb {P}_{x} [S_{n} = y].
\end{equation*}
For SRW on the plane the uniform bound is straightforward from the local
CLT. To extend the result for an $\smash {\widehat {S}}$-walk, just notice
that $\frac{a(y)}{a(x)}$ is bounded by a constant. Indeed, using that
$|y-x| \leqslant n$ and the asymptotic expression for the potential kernel,
we have
\begin{equation*}
\frac{a(y)}{a(x)} = 1 +
\frac{\frac{2}{\pi } \ln \frac{|y|}{|x|} + O(1)}{a(x)} \leqslant 1 +
\frac{c \ln \left (1 + \frac{n}{n^{1/3}}\right ) + O(1)}{c \ln n}
\leqslant 1 + c \cdot \frac{\ln (2n^{2/3})}{\ln n}.
\end{equation*}

\medskip
Now, let us focus on proving~\eqref{eq:lclt}. Notice that~\eqref{eq:uniform_up_bound_lclt}
already proves the upper bound in~\eqref{eq:lclt} for the case
$|y|, |x| \geqslant n^{1/3}$.

\medskip
\noindent
\textbf{Lower bound case $|y|, |x| \geqslant n^{1/3}$.} Our proof of the
lower bound is more involved. We break it into two steps.
\begin{description}
\item[Step 1.] By allowing the walk to run for $\varepsilon n$ steps we
can consider points at distance of order $\sqrt{n}$ from the origin.
\item[Step 2.] We prove the lower bound for points at distance of order
$\sqrt{n}$ confined to a specific region of the plane. Then, we extend
to all points we need by concatenating regions.
\end{description}

\medskip
\noindent
\textbf{Step 1.} Let us consider the annulus
\begin{equation*}
A := \{w \in \mathbb {Z}^{2};\; 2 \varepsilon n^{1/2} \leqslant |w|
\leqslant Kn^{1/2}\}.
\end{equation*}
Then it is possible to choose $\varepsilon , K > 0$ depending on $M$ to
ensure that for every $z$ with
$n^{1/3} \leqslant |z| \leqslant \sqrt{Mn}$ we have
$ \mathbb {P}_{z} \big[ S_{\varepsilon n} \in A, \tau (0) >
\varepsilon n \big] \geqslant \delta $ for large $n$, where
$\delta $ is a positive constant.

\begin{proof}
Notice that by Lemma~\ref{lema:srw_nothit_ball} we can bound
\begin{equation*}
\mathbb {P}_{z} [\tau (0) > \varepsilon n] \geqslant \mathbb {P}_{z} [
\tau (0) > \tau (\partial B(z, \varepsilon n))] =
\frac{a(z)}{a(n) + O(|z|n^{-1})} \geqslant c > 0.
\end{equation*}
This means that the claim will follow once we prove
$\mathbb {P}_{z}[S_{\varepsilon n} \in A]$ can be made close to $1$. For
that, we once again use the Berry-Esseen result from Bentkus~\cite{bentkus2005lyapunov}.
Using equation~\eqref{eq:Bentkus_Berry_Esseen} twice we have
\begin{align*}
\mathbb {P}_{z}[S_{\varepsilon n}\!\in A] &= \mathbb {P}_{0} [S_{
\varepsilon n}\!\in B(K \sqrt{n}) - z] - \mathbb {P}_{0} [S_{
\varepsilon n}\!\in B(2 \varepsilon \sqrt{n}) - z]
\\
&= \mathbb {P}[ \mathcal {N}(z, \Sigma _{\varepsilon n})\!\in B(K
\sqrt{n})] - \mathbb {P}[\mathcal {N}(z, \Sigma _{\varepsilon n})\!
\in B(2 \varepsilon \sqrt{n})] + O\bigl(
\tfrac{1}{\sqrt{\varepsilon n}}\bigr)
\\
&= \mathbb {P}\bigl[ 2 \sqrt{\varepsilon } \leqslant \bigl|
\mathcal {N}\bigl(\tfrac{z}{\sqrt{\varepsilon n}}, \Sigma \bigr)
\bigr| \leqslant \tfrac{K}{\sqrt{\varepsilon }} \bigr] + O\bigl(
\tfrac{1}{\sqrt{\varepsilon n}}\bigr),
\end{align*}
where
$\Sigma _{N} = \tfrac{N}{2} \big[ \substack{ 1 \\ 0 } \,
\substack{ 0 \\ 1 } \big]$ is the covariance matrix of $S_{N}$ and
$\Sigma := \Sigma _{1}$. Let us denote by $f_{\bar{z}}(x)$ the density
of $\mathcal {N}(\bar{z}, \Sigma )$ with respect to the Lebesgue measure,
that is, $f_{\bar{z}}(x) = \tfrac{1}{\pi } e^{-|x - \bar{z}|^{2}}$. Since
$\bar{z} = \tfrac{z}{\sqrt{\varepsilon n}} \in B(\sqrt{M/\varepsilon })$,
we have that
\begin{equation*}
f_{\bar{z}}(x) = f_{0}(x) e^{- |\bar{z}|^{2} + 2 \langle x, \bar{z}
\rangle } \leqslant f_{0}(x) e^{4 \sqrt{M}} \quad
\text{for $|x| \leqslant 2 \sqrt{\varepsilon }$},
\end{equation*}
implying that, for every $\bar{z} \in B(\sqrt{M/\varepsilon })$,
\begin{equation*}
\mathbb {P}\bigl[|\mathcal {N}(\bar{z}, \Sigma )| \leqslant 2 \sqrt{
\varepsilon }\bigr] \leqslant \mathbb {P}\bigl[|\mathcal {N}(0,
\Sigma )| \leqslant 2 \sqrt{\varepsilon }\bigr] e^{4 \sqrt{M}} = (1 - e^{-4
\varepsilon }) e^{4 \sqrt{M}}.
\end{equation*}
On the other hand, we have
\begin{equation*}
\mathbb {P}\bigl[|\mathcal {N}(\bar{z}, \Sigma )| \geqslant
\tfrac{K}{\sqrt{\varepsilon }} \bigr] \leqslant \mathbb {P}\bigl[ |
\mathcal {N}(0, \Sigma )| \geqslant
\tfrac{K - \sqrt{M}}{\sqrt{\varepsilon }} \bigr] = e^{ -
\frac{(K - \sqrt{M})^{2}}{\varepsilon }}.
\end{equation*}
Fix $K > \sqrt{M}$. Making $n \to \infty $, we obtain
\begin{equation*}
\liminf _{n} \mathbb {P}_{z} [S_{\varepsilon n} \in A] \geqslant 1 - (1
- e^{-4 \varepsilon }) e^{4 \sqrt{M}} - e^{ -
\frac{(K - \sqrt{M})^{2}}{\varepsilon }}.
\end{equation*}
Thus, if we take $\varepsilon \downarrow 0$ the right hand side tends to
$1$.
\end{proof}

We can decompose our event with respect to the position of the walk on
times $\varepsilon n$ and $n - \varepsilon n$ and apply Step 1. We have
\begin{align}
\mathbb {P}_{x} &\big[ S_{n} = y, \tau (0) > n \big] \geqslant
\mathbb {P}_{x} \big[ S_{n} = y, \tau (0) > n, S_{\varepsilon n} \in A,
S_{n - \varepsilon n} \in A \big]
\nonumber
\\
&= \sum _{w, w' \in A} \mathbb {P}_{x} \big[ S_{\varepsilon n} = w,
\tau (0) > \varepsilon n \big] \times
\nonumber
\\[-2ex]
&%
\hspace{2cm}
\mathbb {P}_{w} \big[ S_{n - 2 \varepsilon n} = w', \tau (0) > n - 2
\varepsilon n \big] \times \mathbb {P}_{w'} \big[ S_{\varepsilon n} = y,
\tau (0) > \varepsilon n \big]
\nonumber
\\
\label{eq:low_bound_lclt_step1}
&\geqslant \delta ^{2} \inf _{w, w' \in A} \mathbb {P}_{w} \big[ S_{n -
2 \varepsilon n} = w', \tau (0) > n - 2 \varepsilon n \big]
\end{align}
and we just have to prove the lower bound for $w, w' \in A$.

\medskip
\noindent
\textbf{Step 2.} We claim that there is a positive constant $c > 0$ such
that for any $w, w' \in A$ we can write
\begin{equation*}
\mathbb {P}_{w} \big[ S_{n} = w', \tau (0) > n \big] \geqslant
\frac{c}{n}.
\end{equation*}

\begin{proof}
Let us decompose $A$ into 4 regions, by defining
\begin{equation*}
D_{1} := \{z \in A ;\; \langle z, (1, 1) \rangle \geqslant 2
\varepsilon \sqrt{n}\}
\end{equation*}
and $D_{i}$ for $i = 2, 3, 4$ are obtained from $D_{1}$ by rotating
$D_{1}$ around the origin with angles $\tfrac{\pi }{2}$, $\pi $ and
$\tfrac{3\pi }{2}$, respectively. Notice that regions $D_{i}$ cover the
set $A$ and $D_{i}$ intersects $D_{i+1}$ (see Figure~\ref{fig:Regions_low_bound}).
\begin{figure}
\centering
\def\svgwidth{.5\linewidth}
\begingroup%
  \makeatletter%
  \providecommand\color[2][]{%
    \errmessage{(Inkscape) Color is used for the text in Inkscape, but the package 'color.sty' is not loaded}%
    \renewcommand\color[2][]{}%
  }%
  \providecommand\transparent[1]{%
    \errmessage{(Inkscape) Transparency is used (non-zero) for the text in Inkscape, but the package 'transparent.sty' is not loaded}%
    \renewcommand\transparent[1]{}%
  }%
  \providecommand\rotatebox[2]{#2}%
  \newcommand*\fsize{\dimexpr\f@size pt\relax}%
  \newcommand*\lineheight[1]{\fontsize{\fsize}{#1\fsize}\selectfont}%
  \ifx\svgwidth\undefined%
    \setlength{\unitlength}{289.7849496bp}%
    \ifx\svgscale\undefined%
      \relax%
    \else%
      \setlength{\unitlength}{\unitlength * \real{\svgscale}}%
    \fi%
  \else%
    \setlength{\unitlength}{\svgwidth}%
  \fi%
  \global\let\svgwidth\undefined%
  \global\let\svgscale\undefined%
  \makeatother%
  \begin{picture}(1,0.69411491)%
    \lineheight{1}%
    \setlength\tabcolsep{0pt}%
    \put(0,0){\includegraphics[width=\unitlength,page=1]{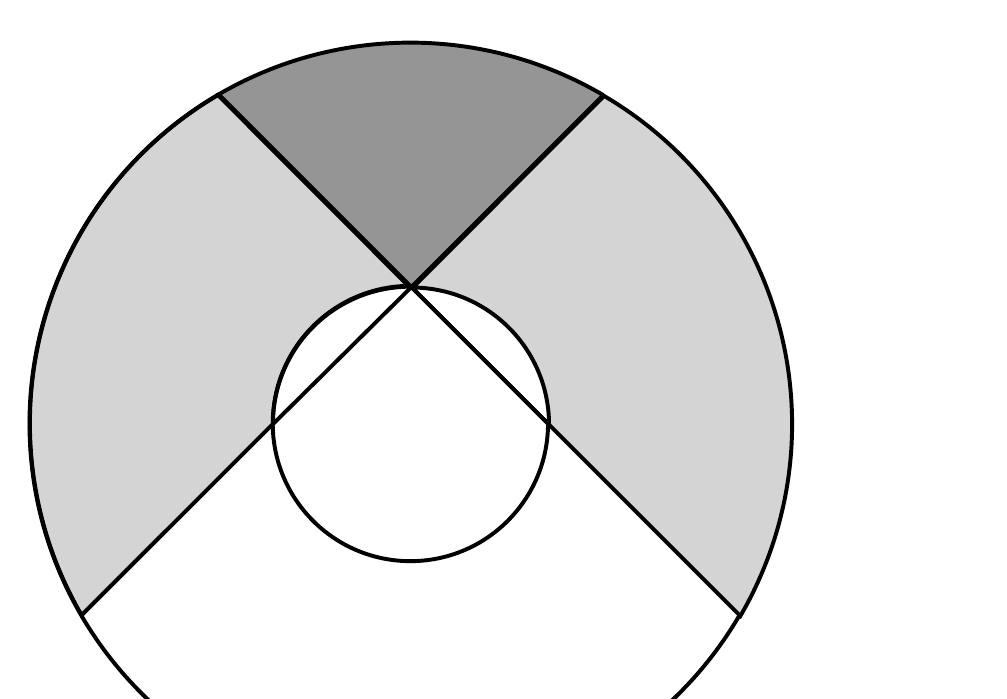}}%
    \put(0.58235363,0.29592541){\color[rgb]{0,0,0}\makebox(0,0)[lt]{\lineheight{1.25}\smash{\begin{tabular}[t]{l}$D_1 \setminus D_2$\end{tabular}}}}%
    \put(0.0800000,0.29592541){\color[rgb]{0,0,0}\makebox(0,0)[lt]{\lineheight{1.25}\smash{\begin{tabular}[t]{l}$D_2 \setminus D_1$\end{tabular}}}}%
    \put(0.32000000,0.54324269){\color[rgb]{0,0,0}\makebox(0,0)[lt]{\lineheight{1.25}\smash{\begin{tabular}[t]{l}$D_1 \cap D_2$\end{tabular}}}}%
    \put(0.35689751,0.08128168){\color[rgb]{0,0,0}\makebox(0,0)[lt]{\lineheight{1.25}\smash{\begin{tabular}[t]{l}$2\varepsilon \sqrt{n}$\end{tabular}}}}%
    \put(0.79170312,0.08128168){\color[rgb]{0,0,0}\makebox(0,0)[lt]{\lineheight{1.25}\smash{\begin{tabular}[t]{l}$K \sqrt{n}$\end{tabular}}}}%
    \put(0,0){\includegraphics[width=\unitlength,page=2]{Regions_low_bound.pdf}}%
  \end{picture}%
\endgroup%
\caption{Regions $D_i$ cover the set $A$. If $w, w' \in D_i$ we have a
    straightforward lower bound. For $w, w'$ in different regions $D_i$
    we concatenate paths.}
\label{fig:Regions_low_bound}
\end{figure}

We prove first that if $w, w' \in D_{1}$ then the lower bound holds. For
this, we use the fact that if we denote $S_{n} = (X_{n}, Y_{n})$ then the
processes $(X_{n} + Y_{n};\; n \geqslant 1)$ and
$(X_{n} - Y_{n} ;\; n \geqslant 1)$ are two independent SRWs on
$\mathbb {Z}$. Let $w, w' \in D_{1}$ and denote
\begin{align*}
a &= w_{1} + w_{2}, & b &= w_{1} - w_{2},
\\
a' &= w'_{1} + w'_{2}, & b' &= w'_{1} - w'_{2}.
\end{align*}
Using $S^{1}_{n}$ to denote the law of a one dimensional SRW we can write
\begin{align}
\mathbb {P}_{w} &\big[ S_{n} = w', \tau (0) > n \big] \geqslant
\mathbb {P}_{w} \big[ S_{n} = w', \langle S_{j}, (1,1) \rangle > 0 \
\text{for $j = 1, \ldots , n$} \big]
\nonumber
\\
&= \mathbb {P}_{a} \big[ S^{1}_{n} = a', \tau ^{1}(0) > n \big]
\cdot \mathbb {P}_{b} \big[ S^{1}_{n} = b' \big]
\nonumber
\\
&= \left ( \mathbb {P}_{a} \big[ S^{1}_{n} = a' \big] - \mathbb {P}_{a}
\big[ S^{1}_{n} = -a' \big] \right ) \cdot \mathbb {P}_{b} \big[ S^{1}_{n}
= b' \big]
\nonumber
\\
&= \mathbb {P}_{a} \big[ S^{1}_{n} = a' \big] \mathbb {P}_{b} \big[ S^{1}_{n}
= b' \big] \bigg( 1 -
\frac{\mathbb {P}_{a} \big[ S^{1}_{n} = -a' \big]}{\mathbb {P}_{a} \big[ S^{1}_{n} = a' \big]}
\bigg)
\nonumber
\\
\label{eq:2d_decoupling_on_D1}
&= \mathbb {P}_{w} \big[ S_{n} = w' \big] \bigg( 1 -
\frac{\mathbb {P}_{a} \big[ S^{1}_{n} = -a' \big]}{\mathbb {P}_{a} \big[ S^{1}_{n} = a' \big]}
\bigg),
\end{align}
where in the third line we used the reflection principle and in the last
line we used the independence of $(X_{n} + Y_{n})$ and
$(X_{n} - Y_{n})$ once again. Since we know $w, w' \in A$, the local CLT
in two dimensions provides
\begin{equation*}
\label{eq:1d_lclt_on_D1}
\mathbb {P}_{w} \big[ S_{n} = w' \big] = \frac{c}{n} \cdot e^{-
\frac{|w - w'|^{2}}{n}} + O(n^{-2}) \geqslant \frac{c}{n} \cdot e^{-4K^{2}}.
\end{equation*}
On the other hand, noticing that
$a, a' \geqslant 2 \varepsilon \sqrt{n}$ we have by one dimensional local
CLT that
\begin{align}
\frac{\mathbb {P}_{a} \big[ S^{1}_{n} = -a' \big]}{\mathbb {P}_{a} \big[ S^{1}_{n} = a' \big]}
&=
\frac{
\frac{c}{\sqrt{n}} e^{- \frac{(a + a')^{2}}{2n}} + O(n^{- \frac{3}{2}})
}{
\frac{c}{\sqrt{n}} e^{- \frac{(a - a')^{2}}{2n}} + O(n^{- \frac{3}{2}})
} = e^{ - \frac{1}{2n} \big( (a + a')^{2} - (a - a')^{2} \big) } + O(n^{-
1})
\nonumber
\\
\label{eq:bound_from_reflection}
&= e^{ - \frac{2a a'}{n} } + O(n^{- 1}) \leqslant e^{- 8 \varepsilon ^{2}}
+ O(n^{-1}).
\end{align}
Putting together \eqref{eq:2d_decoupling_on_D1}-\eqref{eq:bound_from_reflection},
we conclude the lower bound for $w, w' \in D_{1}$ and the same argument
works if $w, w'$ belong to the same $D_{i}$. In case
$w \in D_{1} \setminus D_{2}$ and $w' \in D_{2} \setminus D_{1}$, we notice
that for any $w'' \in D_{1} \cap D_{2}$ we have
\begin{equation*}
\mathbb {P}_{w} \big[ S_{n/2} = w'', \tau (0) > n/2 \big] \geqslant
\frac{c}{n} \quad \text{and} \quad \mathbb {P}_{w''} \big[ S_{n/2} = w',
\tau (0) > n/2 \big] \geqslant \frac{c}{n}.
\end{equation*}
Hence, we can write
\begin{align*}
\mathbb {P}_{w} &\big[ S_{n} = w', \tau (0) > n \big]
\\
&\geqslant \sum _{w'' \in D_{1} \cap D_{2}} \mathbb {P}_{w} \big[ S_{n/2}
= w'', \tau (0) > n/2 \big] \mathbb {P}_{w''} \big[ S_{n/2} = w',
\tau (0) > n/2 \big]
\\
&\geqslant \# (D_{1} \cap D_{2}) \cdot c n^{-2} \geqslant \frac{c}{n}.
\end{align*}
Applying the above reasoning at most twice we prove that the lower bound
holds for every $w, w' \in A$.
\end{proof}

Combining Step~2 with \eqref{eq:low_bound_lclt_step1} we conclude the proof
of the lower bound in the case
$n^{1/3} \leqslant |x|, |y| \leqslant \sqrt{Mn}$. Since we already have
the upper bound for this case, all that remains is to extend~\eqref{eq:lclt}
to the other cases using the decomposition in~\eqref{eq:htau_decomposition}.

\medskip
\noindent
\textbf{Case $|x| \leqslant n^{1/3}$ and $n^{1/3} \leqslant |y|$.} Recalling~\eqref{eq:htau_decomposition}
we have
\begin{equation*}
\mathbb {P}_{x} \big[ \smash {\widehat {S}}_{n} = y \big] =
\mathbb {E}_{x} \Big[
\hspace{-2mm}
\sum _{
\substack{
1\leqslant j \leqslant n^{2/3+\delta } \\
z \in \partial B(n^{1/3})
}}
\hspace{-3mm}
\ind \{ \htau = j, \smash {\widehat {S}}_{j} = z\} \mathbb {P}_{z}
\big[ \smash {\widehat {S}}_{n-j} = y \big] \Big] + O(e^{- c n^{
\delta }}).
\end{equation*}
Since $j \leqslant n^{2/3 + \delta } \ll n$ and
$n^{1/3} \leqslant |y|, |z| \leqslant \sqrt{Mn}$, we conclude from the
previous case that
\begin{equation*}
\mathbb {P}_{x} \big[ \smash {\widehat {S}}_{n} = y \big] = \big( 1 - O
\big(e^{- c n^{\delta }}\big) \big) \cdot \Theta (n^{-1}) + O \big( e^{-
c n^{\delta }} \big) = \Theta (n^{-1}).
\end{equation*}

\medskip
\noindent
\textbf{Case $|y| \leqslant n^{1/3}$ and $n^{1/3} \leqslant |x|$.} This
case is straightforward from the reversibility of the
$\smash {\widehat {S}}$-walk, since
\begin{equation*}
\mathbb {P}_{x} \big[ \smash {\widehat {S}}_{n} = y \big] =
\frac{a(y)^{2}}{a(x)^{2}} \mathbb {P}_{y} \big[ \smash {\widehat {S}}_{n}
= x \big] \asymp \frac{a(y)^{2}}{\ln ^{2} n} \cdot \frac{1}{n}.
\end{equation*}

\medskip
\noindent
\textbf{Case $1 \leqslant |x|, |y| \leqslant n^{1/3}$.} Apply~\eqref{eq:htau_decomposition}
and use the previous case to obtain
$\mathbb {P}_{z} \bigl[ \smash {\widehat {S}}_{n-j} = y \bigr]$.
\end{proof}

\section{Speed of transience}
\label{sec:speed_of_transience}

In this section we prove Theorem~\ref{teo:speed}. Let us recall that
$M_{n} := \min _{m \geqslant n} |\smash {\widehat {S}}_{m}|$.

\medskip
\noindent
\textbf{Eventually
$M_{n} \leqslant \sqrt{(e + \delta )n (\ln \ln n)}$.} Pick
$0 < \delta ' < \tfrac{\delta }{e}$. By Corollary~\ref{coro:hS_extremes_simple}
\begin{equation*}
\mathbb {P}_{x} \big[|\smash {\widehat {S}}_{n}| > \sqrt{(1+\delta ')n
(\ln \ln n)}\big] \leqslant c e^{- (1+\delta ')(\ln \ln n)} =
\frac{c}{\ln ^{1+\delta '} n},
\end{equation*}
and for the sequence $n_{k} = e^{k}$ we have
\begin{equation*}
\sum _{k\geqslant 1} \frac{1}{\ln ^{1+\delta '} n_{k}} = \sum _{k
\geqslant 1} \frac{1}{k^{1+\delta '}} < \infty .
\end{equation*}
The first Borel-Cantelli lemma implies we have
$|\smash {\widehat {S}}_{n_{k}}| \leqslant \sqrt{(1+\delta ')n_{k} (
\ln \ln n_{k})}$ eventually and so does
$M_{n_{k}} \leqslant |\smash {\widehat {S}}_{n_{k}}|$. We can conclude
the result for every $n$ by noticing that if
$k(t) = \lceil \ln t \rceil $ then
\begin{align*}
M_{t} &\leqslant M_{n_{k(t)}} \leqslant \sqrt{(1+\delta ')e^{k(t)} (
\ln k(t))}
\\
&\leqslant \sqrt{(1+\delta ')e^{\ln t + 1} (\ln (\ln t + 1))}
\\
&= \sqrt{(1+\delta ')te \big(\ln \ln t + \ln \big(1 +
\tfrac{1}{\ln t}\big)\big)}
\\
&= \sqrt{(e+e\delta ')t (\ln \ln t) \cdot \big( 1 + o(1) \big)}
\\
&\leqslant \sqrt{(e+\delta )t (\ln \ln t)}
\end{align*}
for large $t$.

\medskip
\noindent
\textbf{Infinitely often $M_{n} \leqslant n^{\delta }$.} Define
\begin{equation*}
\htau _{n} = \inf \{ t > 0; \smash {\widehat {S}}_{t} \in \partial B(n^{
\delta }) \cup \partial B(n) \}
\end{equation*}
and let $\theta _{t}$ be a time shift of $t$ for the Markov chain
$\smash {\widehat {S}}_{n}$. The stopping times
\begin{align*}
U_{0} &:= \inf \{ n\geqslant 1; |\smash {\widehat {S}}_{n}|
\leqslant n^{\frac{1}{2} + \delta }\}, \quad & & U_{i} := \inf \{ n> V_{i-1};
\; |\smash {\widehat {S}}_{n}| \leqslant n^{\frac{1}{2} + \delta }\}
\\
V_{0} &:= \htau _{U_{0}} \circ \theta _{U_{0}}, \quad & & V_{i} :=
\htau _{U_{i}} \circ \theta _{U_{i}}.
\end{align*}
are well defined since the previous result implies that
$|\smash {\widehat {S}}_{n}| \leqslant n^{\frac{1}{2} + \delta }$ infinitely
often. Notice that all of them are finite, we have
$U_{i} < V_{i} < U_{i+1}$, and
$|\smash {\widehat {S}}_{U_{i}}| \leqslant U_{i}^{\frac{1}{2} +
\delta }$. Moreover, if $\mathcal {F}_{U_{i}}$ is the smallest
$\sigma $-algebra that makes $\smash {\widehat {S}}_{U_{i}}$ measurable,
we have
\begin{equation*}
\mathbb {P}[\smash {\widehat {S}}_{V_{i}} \in \partial B(U_{i}^{
\delta }) \mid \mathcal {F}_{U_{i}}] \geqslant c,
\end{equation*}
a positive constant. Indeed, if
$|\smash {\widehat {S}}_{U_{i}}| \leqslant U_{i}^{\delta }$ this probability
is one. On the other hand, if
$|\smash {\widehat {S}}_{U_{i}}| > U_{i}^{\delta }$ the lower bound follows
from Lemma~\ref{lema:htau_r_before_htau_L} since for
$|x| \in (n^{\delta }, n^{1/2+\delta }]$ we have
\begin{align*}
\mathbb {P}_{x} [\htau (n^{\delta }) < \htau (n)] &= 1 -
\frac{%
a(x) - a(n^{\delta }) + O(n^{-\delta })
}{%
a(n) - a(n^{\delta }) + O(n^{-\delta })
} \cdot \frac{a(n)}{a(x)} \cdot (1 + O(n^{-1}))
\\
&= 1 - \Bigl(1 - \frac{a(n^{\delta })}{a(x)} + o(1)\Bigr) \cdot
\frac{1}{1 - \delta } \cdot (1 + o(1))
\\
&\geqslant 1 - \Bigl(1 -
\frac{\delta \ln n + O(1)}{(\frac{1}{2}+\delta ) \ln n + O(1)} + o(1)
\Bigr) \cdot \Bigl(\frac{1}{1 - \delta } + o(1)\Bigr)
\\
&= 1 - \Bigl(1 - \frac{2\delta }{1+2\delta }\Bigr) \cdot
\frac{1}{1 - \delta } + o(1)
\\
&= \frac{\delta (1 - 2\delta )}{(1+2\delta )(1-\delta )} + o(1)
\end{align*}
which is positive for large $n$. We used
$a(x) \leqslant \frac{2}{\pi } (\frac{1}{2}+\delta )\ln n + O(1)$ in the
only inequality. Since these events are independent, we have by the second
Borel-Cantelli lemma that
$|\smash {\widehat {S}}_{V_{i}}| \leqslant U_{i}^{\delta }$ infinitely
often, and thus $M_{n} \leqslant n^{\delta }$ infinitely often.

\medskip
\noindent
\textbf{Eventually $M_{n} \geqslant e^{\ln ^{1 - \delta } n}$.} Let us define
\begin{equation*}
\sigma _{y} := \min \{n \geqslant 0;\; |\smash {\widehat {S}}_{n}|
\geqslant y \} \quad \text{and} \quad \eta _{x} := \max \{n \geqslant 0;
\; |\smash {\widehat {S}}_{n}| \leqslant x \}.
\end{equation*}
For any $y > x > 0$ we can write
\begin{equation*}
\mathbb {P}[\eta _{x} > n] = \mathbb {P}[\eta _{x} > n, \sigma _{y}
\leqslant n] + \mathbb {P}[\eta _{x} > n, \sigma _{y} > n].
\end{equation*}
The second term in this sum is small if we consider
$y = n^{1/2 - \delta }$, since
\begin{equation*}
\mathbb {P}[\sigma _{n^{1/2 - \delta }} > n] = \mathbb {P}\bigl[\max _{t
\leqslant n} |\smash {\widehat {S}}_{t}| < n^{1/2 - \delta }\bigr] = O
\big( e^{- c n^{\delta }} \big)
\end{equation*}
by Lemma~\ref{lema:estimate_htau_r}. On the other hand, we can estimate
the first term by
\begin{equation*}
\mathbb {P}\bigl[\eta _{x} > n, \sigma _{y} \leqslant n\bigr]
\leqslant \mathbb {P}\bigl[\eta _{x} > \sigma _{y}, \sigma _{y}
\leqslant n\bigr] \leqslant \mathbb {P}_{y} \bigl[ \htau (x) <
\infty \bigr] \sim \frac{\ln x}{\ln y},
\end{equation*}
implying that
\begin{equation*}
\mathbb {P}\bigl[\eta _{x} > n\bigr] \leqslant c_{\delta } \cdot
\frac{\ln x}{\ln n}
\end{equation*}
as long as $e^{-c n^{\delta }} \ll \frac{\ln x}{\ln n}$. Consider sequences
$x_{k}=e^{k^{m}}$ and $n_{k}=e^{k^{m+2}}$ where~$m$ is a parameter we choose
later. Then, we have
\begin{equation*}
\sum _{k\geqslant 1} \mathbb {P}\bigl[\eta _{x_{k}} > n_{k}\bigr]
\leqslant \sum _{k\geqslant 1} \frac{k^{m}}{k^{m+2}} < \infty \quad
\text{implies} \quad \mathbb {P}\bigl[\eta _{x_{k}} \leqslant n_{k},
\text{ eventually}\bigr] = 1.
\end{equation*}
This means that a.s.\ there is a random $k_{0}$ such that for every
$k \geqslant k_{0}$ and $n \geqslant n_{k}$ it holds
$|\smash {\widehat {S}}_{n}| \geqslant x_{k}$. Notice that we can write
\begin{equation*}
x_{k} = \exp [k^{m}] = \exp [ \ln ^{\frac{m}{m+2}} n_{k}] = \exp [
\ln ^{1 - \frac{2}{m+2}} n_{k}].
\end{equation*}
Thus, we have
$M_{n_{k}} \geqslant \exp [ \ln ^{1 - \frac{2}{m+2}} n_{k}]$ for
$k \geqslant k_{0}$. Since $M_{n}$ is non-decreasing we can extend the
result for every large $n$. Indeed, for any $t\in \mathbb {N}$ let
$k(t)$ be the largest integer such that $n_{k(t)} \leqslant t$, i.e.,
$k(t) := \lfloor \ln ^{\frac{1}{m+2}} t \rfloor $. We have
\begin{align*}
M_{t} &\geqslant M_{n_{k(t)}} \geqslant x_{k(t)} = \exp \Big[ \Big(
\lfloor \ln ^{\frac{1}{m+2}} t \rfloor \Big)^{m} \Big]
\\
&\geqslant \exp \Big[ \Big( \ln ^{\frac{1}{m+2}} t - 1 \Big)^{m}
\Big]
\\
&\geqslant \exp \Big[ \ln ^{1-\frac{2}{m+2}} t - m\ln ^{1-
\frac{3}{m+2}} t \Big].
\end{align*}
Since $m$ is arbitrary, we can choose $m$ so that
$\frac{2}{m+2} < \delta $. Denoting $s = \ln t$, we have that
$M_{t} \geqslant \exp [ \ln ^{1-\delta } t ]$ if
\begin{equation*}
s^{1-\frac{2}{m+2}} - ms^{1-\frac{3}{m+2}} \geqslant s^{1-\delta }
\quad \text{or equivalently} \quad s^{\delta -\frac{2}{m+2}} - ms^{
\delta -\frac{3}{m+2}} \geqslant 1,
\end{equation*}
which holds for large $t$.

\medskip
\noindent
\textbf{Infinitely often
$M_{n} \geqslant \frac{\sqrt{n}}{\ln ^{\delta } n}$.} In general, at time
$n$ we have that the walk should be at a distance of order~$\sqrt{n}$ from
the origin. We consider three breaking points near $\sqrt{n}$:
\begin{equation}
\label{eq:defi_u_l_m}
u(n) = \sqrt{n \ln ^{2} \ln n} \quad \text{and} \quad l(n) =
\frac{\sqrt{n}}{\ln \ln n} \quad \text{and} \quad m(n) =
\frac{\sqrt{n}}{\ln ^{\delta } n}.
\end{equation}
The letters above are chosen as mnemonics for upper, lower and minimum,
respectively, and we omit their dependence on $n$. By Corollary~\ref{coro:hS_extremes_simple}
we know
\begin{equation}
\label{eq:KochenStone_extremes}
\mathbb {P}_{x} \bigl[|\smash {\widehat {S}}_{n}| < l \bigr]
\leqslant \frac{c}{\ln ^{2} \ln n} \quad \text{and} \quad \mathbb {P}_{x}
\bigl[|\smash {\widehat {S}}_{n}| > u \bigr] \leqslant
\frac{c}{e^{\ln ^{2} \ln n}}.
\end{equation}
We notice that on the highly probable event
$\{|\smash {\widehat {S}}_{n}| \in [l, u]\}$ the probability of avoiding
$B(m)$ is almost independent of the precise position. Indeed, for
$|z| \in [l, u]$ our Lemma~\ref{lema:htau_r_before_htau_L} states that
\begin{align}
\mathbb {P}_{z} [\htau (m) = \infty ] &=
\frac{a(z) - a(m) + O(m^{-1})}{a(z)}
\nonumber
\\
&=
\frac{%
\frac{1}{2} \ln n + O(\ln \ln \ln n) -
\frac{1}{2} \ln n + \delta \ln \ln n
}{%
\frac{1}{2} \ln n + O(\ln \ln \ln n)
}
\nonumber
\\
\label{eq:KochenStone_const_hitting}
&= (2\delta + o(1)) \frac{\ln \ln n}{\ln n}.
\end{align}
The estimate~\eqref{eq:KochenStone_const_hitting} is essentially the probability
of $\mathbb {P}_{x}[M_{n} \geqslant m]$. Indeed, by~\eqref{eq:KochenStone_const_hitting}
we can see that a lower bound for
$\mathbb {P}_{x} [M_{n} \geqslant m]$ is
\begin{align*}
\mathbb {P}_{x} [M_{n} \geqslant m] &\geqslant \mathbb {P}_{x} \bigl[ |
\smash {\widehat {S}}_{n}| \geqslant l \bigr] \cdot \mathbb {P}_{x}
\bigl[ M_{n} \geqslant m \ \bigm|\ |\smash {\widehat {S}}_{n}|
\geqslant l \bigr]
\\
&\geqslant (1+o(1)) \cdot \mathbb {P}_{l} \bigl[\htau (m) = \infty
\bigr]
\\
&= 2\delta \frac{\ln \ln n}{\ln n} (1+o(1)),
\end{align*}
and a similar computation leads to the same upper bound
\begin{align*}
\mathbb {P}_{x} [M_{n} \geqslant m] &\leqslant \mathbb {P}_{x} \bigl[ |
\smash {\widehat {S}}_{n}| \in [m, u] \bigr] \!\cdot \mathbb {P}_{x}
\bigl[ M_{n} \geqslant m \!\bigm| |\smash {\widehat {S}}_{n}| \in [m, u]
\bigr] + \mathbb {P}_{x} \bigl[ |\smash {\widehat {S}}_{n}| > u
\bigr]
\\
&\leqslant 1 \cdot \mathbb {P}_{u} \bigl[\htau (m) = \infty \bigr] + c
\cdot e^{-\ln ^{2} \ln n}
\\
&= (2\delta + o(1)) \frac{\ln \ln n}{\ln n}.
\end{align*}

The idea now is to apply a generalization of Borel-Cantelli Lemma, known
as the Kochen-Stone theorem~\cite{kochen1964note}. It asserts that for
any sequence of events~$A_{n}$ such that
$\sum \mathbb {P}[A_{n}] = \infty $ it holds
\begin{equation}
\label{eq:KochenStone_theorem}
\mathbb {P}[A_{n}\ \text{i.o.}] \geqslant \limsup _{k}
\frac{
\sum _{i_{0} \leqslant i < j \leqslant k} \mathbb {P}[A_{i}] \mathbb {P}[A_{j}]
}{
\sum _{i_{0} \leqslant i < j \leqslant k} \mathbb {P}[A_{i} \cap A_{j}]
},
\end{equation}
with $i_{0}$ being any finite starting point. Let
\begin{equation*}
A_{k} := \bigl\{M_{n_{k}} \geqslant
\tfrac{\sqrt{n_{k}}}{\ln ^{\delta } n_{k}} \bigr\} \quad
\text{with $n_{k} = e^{k \ln ^{2} k}$}
\end{equation*}
and define quantities $u_{k}, l_{k}, m_{k}$ like in~\eqref{eq:defi_u_l_m}
for $n = n_{k}$. This choice of $n_{k}$ ensures that $n_{k}$ grows quickly,
which makes events $A_{k}$ closer to independent, while keeping
$\sum \mathbb {P}_{x}[A_{j}]$ divergent since
\begin{align*}
\sum _{j=1}^{k} \mathbb {P}_{x} [A_{j}] &= (2\delta + o(1)) \sum _{j=1}^{k}
\frac{\ln j + 2 \ln \ln j}{j \ln ^{2} j} = (2\delta + o(1)) \sum _{j=1}^{k}
\frac{1}{j \ln j}
\\
&= (2\delta + o(1)) \ln \ln k.
\end{align*}
Also, it is simple to check that
$m_{k} \ll l_{k} \ll u_{k} \ll m_{k+1}$ as $k \to \infty $. For the numerator
on~\eqref{eq:KochenStone_theorem} we get
\begin{align}
\sum _{i_{0} \leqslant i < j \leqslant k} \mathbb {P}_{x} [A_{i}]
\mathbb {P}_{x} [A_{j}] &= \frac{1}{2} \biggl[ \biggl(\sum _{j=i_{0}}^{k}
\mathbb {P}_{x} [A_{j}]\biggr)^{2} - \sum _{j=i_{0}}^{k} \mathbb {P}_{x}
[A_{j}]^{2} \biggr]
\nonumber
\\
\label{eq:numerator_Kochen_Stone}
&= (2 \delta ^{2} + o(1)) \ln ^{2} \ln k.
\end{align}

\medskip
\noindent
\textbf{Estimating the denominator.} We need to show that the denominator
of~\eqref{eq:KochenStone_theorem} has the same asymptotic behavior as in~\eqref{eq:numerator_Kochen_Stone}.
We firstly prove
\begin{lemma}
\label{lema:KochenStone_denom_simpler}
It holds that
\begin{equation*}
\label{eq:KochenStone_denom_simpler}
\!\sum _{i_{0} \leqslant i < j \leqslant k}\!\!\! \mathbb {P}_{x}[A_{i}
\cap A_{j}] =\!\!\!\!\sum _{i_{0} \leqslant i < j \leqslant k}\!\!\!
\mathbb {P}_{u_{i}}[\htau (l_{j}) < \htau (m_{i})] \cdot \mathbb {P}_{u_{j}}[
\htau (m_{j}) = \infty ] + o\bigl(\ln ^{2} \ln k\bigr).
\end{equation*}
\end{lemma}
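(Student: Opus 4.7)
My plan has three parts: apply the strong Markov property at $n_j$ to peel off a factor $\mathbb{P}_{u_j}[\htau(m_j)=\infty]$; apply Markov at $n_i$ and a first-passage decomposition at scale $l_j$ to recognize the factor $\mathbb{P}_{u_i}[\htau(l_j)<\htau(m_i)]$; and verify that the accumulated approximation errors sum to $o(\ln^2\ln k)$.

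Since $m_i\leq m_j$ for large $i$, the event $A_i\cap A_j$ equals $\{|\hS_n|\geq m_i\,\forall n\in[n_i,n_j]\}\cap\{M_{n_j}\geq m_j\}$, and Markov at $n_j$ gives
\[
\mathbb{P}_x[A_i\cap A_j] = \mathbb{E}_x\bigl[\ind\{|\hS_n|\geq m_i\,\forall n\in[n_i,n_j]\}\,\mathbb{P}_{\hS_{n_j}}[\htau(m_j)=\infty]\bigr].
\]
As in the derivation of \eqref{eq:KochenStone_const_hitting}, Lemma~\ref{lema:htau_r_before_htau_L} yields $\mathbb{P}_z[\htau(m_j)=\infty]=(1+o(1))\mathbb{P}_{u_j}[\htau(m_j)=\infty]$ uniformly for $|z|\in[l_j,u_j]$, while the complementary event has probability controlled by \eqref{eq:KochenStone_extremes}. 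Setting $F_{ij}:=\{|\hS_n|\geq m_i\,\forall n\in[n_i,n_j],\,|\hS_{n_j}|\in[l_j,u_j]\}$, this splits $\mathbb{P}_x[A_i\cap A_j]$ as $(1+o(1))\mathbb{P}_{u_j}[\htau(m_j)=\infty]\,\mathbb{P}_x[F_{ij}]$ plus an error $R_{ij}$ to be bounded later.

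To handle $\mathbb{P}_x[F_{ij}]$, apply Markov at $n_i$ and restrict to $|\hS_{n_i}|\in[l_i,u_i]$. Any realization of $F_{ij}$ forces the walk to cross $\partial B(l_j)$ before $\partial B(m_i)$, so $\mathbb{P}_x[F_{ij}]\leq(1+o(1))\mathbb{P}_{u_i}[\htau(l_j)<\htau(m_i)]$, with uniformity in the starting point $|z|\in[l_i,u_i]$ again coming from Lemma~\ref{lema:htau_r_before_htau_L}. For the matching lower bound I decompose on $T=\htau(l_j)$ under $\{\htau(l_j)<\htau(m_i)\}$ and control three loss events: (a) $T>n_j-n_i$, bounded via Lemma~\ref{lema:estimate_htau_r} since $l_j^2\ll n_j$; (b) a return to $B(m_i)$ during $[T,n_j-n_i]$, bounded by $\mathbb{P}_{l_j}[\htau(m_i)<\infty]\asymp\ln n_i/\ln n_j$ via Lemma~\ref{lema:htau_r_before_htau_L}; and (c) $|\hS_{n_j-n_i}|\notin[l_j,u_j]$, bounded by Lemma~\ref{lema:hS_in_extremes}.

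The main obstacle will be the final bookkeeping. With $n_k=e^{k\ln^2 k}$ one has $\mathbb{P}_{u_j}[\htau(m_j)=\infty]\asymp 1/(j\ln j)$ and $\mathbb{P}_{u_i}[\htau(l_j)<\htau(m_i)]\asymp 1/(i\ln i)$, so the main sum is of order $\ln^2\ln k$. The subtlest error is the ``return'' factor $\ln n_i/\ln n_j$ in (b), which is not small when $j$ is close to $i$; however, the super-exponential gap between consecutive $n_k$'s makes the resulting double sum $O(1)$. The exponential bound $e^{-c\ln^2\ln n_j}$ from (a) and from the upper tail in \eqref{eq:KochenStone_extremes}, combined with the polynomial bound $O(1/\ln^2\ln n_j)$ from (c) and from the lower tail (the latter multiplied by the Markov factor $\mathbb{P}_{\hS_{n_j}}[\htau(m_j)=\infty]\leq p_j$, which is crucial to beat the naive bound), handle the remaining error terms; each summed over pairs $i<j\leq k$ is readily checked to be $o(\ln^2\ln k)$.
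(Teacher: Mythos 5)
There is a genuine gap in your bound for the error $R_{ij}$ coming from the event $\{|\hS_{n_j}|\in(m_j,l_j)\}$. The Markov factor you propose to multiply by, $\max_{|z|\in(m_j,l_j)}\mathbb{P}_z[\htau(m_j)=\infty]=\mathbb{P}_{l_j}[\htau(m_j)=\infty]\asymp\frac{\ln\ln n_j}{\ln n_j}$, is of the \emph{same} order as $\mathbb{P}_{u_j}[\htau(m_j)=\infty]$, so it does not beat the naive bound as hoped. Multiplying $O(1/\ln^2\ln n_j)$ by this gives $O\bigl(\frac{1}{\ln n_j\ln\ln n_j}\bigr)\asymp\frac{1}{j\ln^3 j}$ per pair $(i,j)$, and since for each $j$ there are $\asymp j$ choices of $i<j$, the resulting sum is
\begin{equation*}
\sum_{i_0\leqslant i<j\leqslant k} R_{ij}^{(1)}\ \lesssim\ \sum_{j\leqslant k}\frac{j}{j\ln^3 j}\ \asymp\ \frac{k}{\ln^3 k},
\end{equation*}
which is far larger than $\ln^2\ln k$, so this is not $o(\ln^2\ln k)$. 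The bound you use on $R_{ij}$ discards the constraint $\{|\hS_n|\geqslant m_i\ \forall n\in[n_i,n_j]\}$, which is what would supply an additional factor of roughly $\mathbb{P}_{u_i}[\htau(l_j)<\htau(m_i)]\asymp\frac{1}{i\ln i}$ and make the double sum summable in $i$.

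The cleanest way out is to not split at $l_j$ at all. The purpose of that split is to obtain the asymptotic \emph{equality} $\mathbb{P}_z[\htau(m_j)=\infty]=(1+o(1))\mathbb{P}_{u_j}[\htau(m_j)=\infty]$, but only the upper bound on $\sum\mathbb{P}_x[A_i\cap A_j]$ is actually needed for the Kochen--Stone argument (despite the ``$=$'' in the statement). Since $\mathbb{P}_z[\htau(m_j)=\infty]$ is monotone in $|z|$, the one-sided bound $\mathbb{P}_z[\htau(m_j)=\infty]\leqslant\mathbb{P}_{u_j}[\htau(m_j)=\infty]$ holds uniformly over $|z|\in[m_j,u_j]$; absorbing all of $(m_j,u_j]$ into the main term leaves only the tail $\{|\hS_{n_j}|>u_j\}$, whose super-polynomially small probability is summable. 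This is what the paper does: it restricts the position at time $n_i$ to $[m_i,u_i]$, peels off $\mathbb{P}_{u_i}[\htau(l_j)<\htau(m_i)]$ at the first passage to $\partial B(l_j)$, and only splits at $u_j$ (never at $l_j$). If you insist on keeping the $l_j$-split, you must carry the ``stay above $m_i$'' factor through the Markov decomposition of $R_{ij}$; as written, your error estimate is not enough. A separate but minor slip: for the ``return'' term in your item (b), $\mathbb{P}_{l_j}[\htau(m_i)<\infty]\asymp\frac{\ln n_i}{\ln n_j}=\frac{i\ln^2 i}{j\ln^2 j}$, which tends to $1$ as $j\downarrow i+1$; the super-exponential growth of $n_k$ makes $n_{j}/n_i$ huge, not $\ln n_j/\ln n_i$, so that intuition is incorrect (though since this is part of a lower bound that the application never needs, it is less consequential).
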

Recall that by~\eqref{eq:KochenStone_const_hitting} we have
\begin{equation*}
\mathbb {P}_{x}[A_{j}] = (1+o(1)) \cdot \mathbb {P}_{u_{j}}[\htau (m_{j})
= \infty ] = \frac{2\delta + o(1)}{j \ln j}.
\end{equation*}
Using Lemma~\ref{lema:htau_r_before_htau_L}, we have a similar estimate
for the term
\begin{align}
\mathbb {P}_{u_{i}} &[\htau (l_{j}) < \htau (m_{i})]
\nonumber
\\
&= (1 + O(l_{j}^{-1})) \cdot
\frac{a(u_{i}) - a(m_{i}) + O(m_{i}^{-1})}{a(l_{j}) - a(m_{i}) + O(m_{i}^{-1})}
\cdot \frac{a(l_{j})}{a(u_{i})}
\nonumber
\\
&= (1 + o(1)) \cdot
\frac{a(u_{i}) - a(m_{i}) + O(m_{i}^{-1})}{a(u_{i})} \cdot
\frac{a(l_{j})}{a(l_{j}) - a(m_{i}) + O(m_{i}^{-1})}
\nonumber
\\
\label{eq:KochenStone_perturbation}
&= \frac{(2\delta + o(1))}{i \ln i} \cdot
\frac{j \ln ^{2} j}{j \ln ^{2} j - i \ln ^{2} i}.
\end{align}
After proving Lemma~\ref{lema:KochenStone_denom_simpler}, the proof that
$M_{n} \geqslant \tfrac{\sqrt{n}}{\ln ^{\delta } n}$ infinitely often will
be complete using the following estimate.
\begin{lemma}
\label{lema:double_sum_estimate}
We have that
\begin{equation*}
\label{eq:double_sum_estimate}
\sum _{i_{0} \leqslant i < j \leqslant k}\!\! \frac{1}{i \ln i}
\cdot \frac{\ln j}{j \ln ^{2} j - i \ln ^{2} i} \leqslant \left (
\frac{1}{2} + o(1)\right ) \cdot \ln ^{2} \ln k.
\end{equation*}
\end{lemma}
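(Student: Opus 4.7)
The approach is to split the sum according to the ratio $j/i$. Fix a large parameter $C > 1$ and decompose
$$S_k := \sum_{i_0 \leq i < j \leq k} \frac{1}{i \ln i} \cdot \frac{\ln j}{j \ln^2 j - i \ln^2 i} = S_k^{\mathrm{far}} + S_k^{\mathrm{near}},$$
where $S_k^{\mathrm{far}}$ restricts to $j \geq Ci$ and $S_k^{\mathrm{near}}$ to $i < j < Ci$. The plan is to show that $S_k^{\mathrm{far}}$ produces the desired constant $\tfrac{1}{2}$, up to an error that vanishes as $C \to \infty$, while $S_k^{\mathrm{near}} = O_C(\ln \ln k) = o(\ln^2 \ln k)$.

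For the far part, when $j \geq Ci$ monotonicity of $t \mapsto t \ln^2 t$ gives $i \ln^2 i \leq \tfrac{1}{C}\, j \ln^2 j$, hence $j \ln^2 j - i \ln^2 i \geq (1 - 1/C)\, j \ln^2 j$ and
$$\frac{\ln j}{i \ln i \, (j \ln^2 j - i \ln^2 i)} \leq \frac{1}{1 - 1/C} \cdot \frac{1}{i \ln i \cdot j \ln j}.$$
The resulting double sum decouples. Using $\sum_{Ci \leq j \leq k} \frac{1}{j \ln j} \leq \ln \ln k - \ln \ln i + O(1)$ (since $Ci \geq i$) and the change of variables $u = \ln \ln i$, for which $du$ corresponds to $\tfrac{di}{i \ln i}$, one converts
$$\sum_{i_0 \leq i \leq k} \frac{\ln \ln k - \ln \ln i}{i \ln i} \approx \int_{u_0}^{U}(U - u)\, du = \frac{U^2}{2} + O(U)$$
with $U = \ln \ln k$. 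Therefore $S_k^{\mathrm{far}} \leq \frac{1 + o(1)}{2(1 - 1/C)} \ln^2 \ln k$.

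For the near part I apply the mean value theorem to $F(t) = t \ln^2 t$: since $F'(t) = \ln^2 t + 2 \ln t \geq \ln^2 i$ for $t \geq i$, one has $j \ln^2 j - i \ln^2 i \geq (j - i) \ln^2 i$. Combining with $\ln j \leq \ln(Ci) \leq 2 \ln i$ for $i$ large, each summand is at most $\tfrac{c_C}{i \ln^2 i \, (j - i)}$, and summing over $d = j - i$ from $1$ to $(C-1)i$ yields $O_C(\ln i)/(i \ln^2 i) = O_C(1)/(i \ln i)$, so $S_k^{\mathrm{near}} = O_C(\ln \ln k)$. Given any $\varepsilon > 0$, I first pick $C$ so that $\frac{1}{2(1 - 1/C)} \leq \tfrac{1}{2} + \tfrac{\varepsilon}{2}$ and then take $k$ large so that the remaining errors are at most $\tfrac{\varepsilon}{2} \ln^2 \ln k$, proving the claim.

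The main obstacle is obtaining the sharp constant $\tfrac{1}{2}$: a crude estimate such as $j \ln^2 j - i \ln^2 i \geq \tfrac{1}{2} j \ln^2 j$ (valid on $j \geq 2i$) would inflate the final constant by the reciprocal of the fraction used. The remedy is the auxiliary parameter $C$, which is taken large to make the denominator essentially equal to $j \ln^2 j$ in the dominant regime, while the near-diagonal contribution — which would otherwise blow up — is tamed because the sum of $1/(j-i)$ for $j - i \in [1, (C-1)i]$ is only logarithmic and the outer weight $1/(i \ln^2 i)$ absorbs the extra $\ln i$.
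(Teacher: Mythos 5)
Your proof is correct, but it takes a genuinely different route from the paper. The paper bounds $j\ln^2 j - i\ln^2 i \geq (j-i)\ln^2 j$ and then splits according to $i \leq \ln k$, $j - i \leq i$, and the main regime $j - i > i > \ln k$; the factor $\tfrac{1}{2}$ emerges from the identity $\ln^2\ln k - \sum_{i}\tfrac{\ln\ln i}{i\ln i} \approx \ln^2\ln k - \tfrac{1}{2}\ln^2\ln k$, i.e.\ an antiderivative cancellation that keeps the exact $(j-i)\ln^2 j$ form rather than discarding a constant factor. You instead introduce an auxiliary parameter $C$, split on the ratio $j/i$, and in the far regime $j\geq Ci$ replace the denominator by $(1-1/C)\,j\ln^2 j$, which decouples the double sum and yields the constant $\tfrac{1}{2(1-1/C)}$ directly from $\int_0^U (U-u)\,du = U^2/2$; the near regime is tamed by the mean value theorem and is $O_C(\ln\ln k)$, and sending $C\to\infty$ recovers the sharp $\tfrac{1}{2}$. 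Your approach is arguably cleaner about where the $\tfrac12$ comes from (the area of a triangle) at the cost of the extra limiting parameter, whereas the paper's gets it in one pass with a fixed (implicit $C=2$) threshold by working with the exact $(j-i)\ln^2 j$. One small point of hygiene: in the far part the $i$-sum should run only up to $k/C$ (otherwise $\ln\ln k - \ln\ln i$ can go negative while bounding a nonnegative quantity); this costs nothing since $\ln\ln(k/C) = \ln\ln k + o(1)$, but it is worth stating.
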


\begin{proof}[Proof of Lemma~\ref{lema:KochenStone_denom_simpler}]
For $i < j$ we decompose event $A_{i} \cap A_{j}$ with respect to the position
of $\smash {\widehat {S}}_{n_{i}}$, considering the intervals
\begin{equation*}
I_{1} = [m_{i}, u_{i}] \quad \text{and} \quad I_{2} = \bigl(u_{i}, n_{j}^{1/3}
\bigr] \quad \text{and} \quad I_{3} = \bigl(n_{j}^{1/3}, \infty \bigr).
\end{equation*}
We begin by showing that by restricting to intervals $I_{2}$ and
$I_{3}$ we obtain upper bounds of smaller order than what we need.

\medskip
\noindent
\textbf{Interval $I_{3}$.} Notice that after $n_{i}$ steps an
$\smash {\widehat {S}}$-walk can be at most at distance
$|x| + n_{i}$. Also, if $j = 3i$ we have that
\begin{align*}
n_{j}^{1/3} &= \exp \bigl[ \tfrac{1}{3} (3i) \ln ^{2} (3i) \bigr] =
\exp \bigl[ i \ln ^{2} i + (2 (\ln 3) + o(1)) \cdot i \ln i \bigr]
\gg n_{i},
\end{align*}
implying that, for every $i \geqslant i_{0}(x)$,
$\smash {\widehat {S}}_{n_{i}}$ cannot reach
$\partial B(n_{j}^{1/3})$ if $j \geqslant 3i$. Thus, we can write
\begin{align*}
\!\!\sum _{i_{0} \leqslant i < j \leqslant k}\!\!\! \!\mathbb {P}_{x} [A_{i}
\cap A_{j}, |\smash {\widehat {S}}_{n_{i}}| \in I_{3}] &= \sum _{i = i_{0}}^{k-1}
\sum _{j=i+1}^{(3i) \wedge k} \!\mathbb {P}_{x} [A_{i} \cap A_{j}, |
\smash {\widehat {S}}_{n_{i}}| \in I_{3}] \leqslant \sum _{i = i_{0}}^{k-1}
\frac{c \cdot (2i)}{e^{ \ln ^{2} \ln n_{i} }}
\end{align*}
which is finite since we know that
$\ln \ln n_{i} = (\ln i) (1 + o(1))$.

\medskip
\noindent
\textbf{Interval $I_{2}$.} We have by Markov property at times
$n_{i}$ and $n_{j} - n_{i}$ and the estimates in~\eqref{eq:KochenStone_extremes}
that
\begin{align*}
&\mathbb {P}_{x}[A_{i} \cap A_{j} \mid |\smash {\widehat {S}}_{n_{i}}|
\in I_{2}]
\\
&\leqslant \max _{|z| \in I_{2}} \left \{ \mathbb {P}_{z} \bigl[ |
\smash {\widehat {S}}_{n_{j} - n_{i}}| \in [m_{j}, u_{j}], \{\htau (m_{j})
= \infty \} \circ \theta _{n_{j} - n_{i}}\bigr] + \mathbb {P}_{z}
\bigl[|\smash {\widehat {S}}_{n_{j} - n_{i}}| > u_{j} \bigr] \right
\}
\\
&\leqslant \max _{|z| \in I_{2}} \mathbb {P}_{z} [|
\smash {\widehat {S}}_{n_{j} - n_{i}}| \in [m_{j}, u_{j}]] \cdot \!\!
\max _{|w| \in [m_{j}, u_{j}]} \mathbb {P}_{w} [ \htau (m_{j}) =
\infty ] + \frac{c}{e^{\ln ^{2} \ln n_{j} (1 + o(1))}}
\\
&\leqslant 1 \cdot \mathbb {P}_{u_{j}}[\htau (m_{j}) = \infty ] +
\frac{c}{e^{\ln ^{2} \ln n_{j} (1 + o(1))}}.
\end{align*}
Thus, using~\eqref{eq:KochenStone_const_hitting} we can write
\begin{align*}
\mathbb {P}_{x}\bigl[A_{n_{i}} \cap A_{n_{j}}, |\smash {\widehat {S}}_{n_{i}}|
\in I_{2}\bigr] &= \mathbb {P}_{x}\bigl[|\smash {\widehat {S}}_{n_{i}}|
\in I_{2}\bigr] \cdot \mathbb {P}_{x}\bigl[A_{n_{i}} \cap A_{n_{j}}
\mid |\smash {\widehat {S}}_{n_{i}}| \in I_{2}\bigr]
\\
&\leqslant \frac{c}{e^{\ln ^{2} \ln n_{i}}} \cdot (2\delta + o(1))
\frac{\ln \ln n_{j}}{\ln n_{j}}.
\end{align*}
Summing for $i < j$ and using that $e^{-\ln ^{2} \ln n_{i}}$ is summable
(recall that $\ln \ln n_{i} \sim \ln i$) we get \begin{align*}
\sum _{i_{0} \leqslant i < j \leqslant k}\!\!\!
\mathbb {P}_{x} \bigl[A_{n_{i}} \cap A_{n_{j}}, |\smash {\widehat {S}}_{n_{i}}| \in I_{2}\bigr]
&\leqslant \!\sum _{j = i_{0} + 1}^{k} \!\frac{(2\delta + o(1))}{j \ln j}
\sum _{i=i_{0}}^{j-1} \frac{c}{e^{\ln ^{2} \ln n_{i}}}
\leqslant c_{\delta } \ln \ln k.
\end{align*}

\begin{figure}[ht]
\centering
\begingroup%
  \makeatletter%
  \providecommand\color[2][]{%
    \errmessage{(Inkscape) Color is used for the text in Inkscape, but the package 'color.sty' is not loaded}%
    \renewcommand\color[2][]{}%
  }%
  \providecommand\transparent[1]{%
    \errmessage{(Inkscape) Transparency is used (non-zero) for the text in Inkscape, but the package 'transparent.sty' is not loaded}%
    \renewcommand\transparent[1]{}%
  }%
  \providecommand\rotatebox[2]{#2}%
  \newcommand*\fsize{\dimexpr\f@size pt\relax}%
  \newcommand*\lineheight[1]{\fontsize{\fsize}{#1\fsize}\selectfont}%
  \ifx\svgwidth\undefined%
    \setlength{\unitlength}{381.17416082bp}%
    \ifx\svgscale\undefined%
      \relax%
    \else%
      \setlength{\unitlength}{\unitlength * \real{\svgscale}}%
    \fi%
  \else%
    \setlength{\unitlength}{\svgwidth}%
  \fi%
  \global\let\svgwidth\undefined%
  \global\let\svgscale\undefined%
  \makeatother%
  \begin{picture}(1,0.46651341)%
    \lineheight{1}%
    \setlength\tabcolsep{0pt}%
    \put(0,0){\includegraphics[width=\unitlength,page=1]{KochenStone_Ai_Aj.pdf}}%
    \put(-0.00391982,0.09582163){\color[rgb]{0,0,0}\makebox(0,0)[lt]{\lineheight{1.25}\smash{\begin{tabular}[t]{l}$m_i$\end{tabular}}}}%
    \put(-0.00391982,0.12481674){\color[rgb]{0,0,0}\makebox(0,0)[lt]{\lineheight{1.25}\smash{\begin{tabular}[t]{l}$l_i$\end{tabular}}}}%
    \put(-0.00391982,0.16171948){\color[rgb]{0,0,0}\makebox(0,0)[lt]{\lineheight{1.25}\smash{\begin{tabular}[t]{l}$u_i$\end{tabular}}}}%
    \put(-0.00391984,0.26908421){\color[rgb]{0,0,0}\makebox(0,0)[lt]{\lineheight{1.25}\smash{\begin{tabular}[t]{l}$m_j$\end{tabular}}}}%
    \put(-0.00391984,0.31237144){\color[rgb]{0,0,0}\makebox(0,0)[lt]{\lineheight{1.25}\smash{\begin{tabular}[t]{l}$l_j$\end{tabular}}}}%
    \put(-0.00391984,0.36746444){\color[rgb]{0,0,0}\makebox(0,0)[lt]{\lineheight{1.25}\smash{\begin{tabular}[t]{l}$u_j$\end{tabular}}}}%
    \put(0.18188396,0.00745985){\color[rgb]{0,0,0}\makebox(0,0)[lt]{\lineheight{1.25}\smash{\begin{tabular}[t]{l}$n_i$\end{tabular}}}}%
    \put(0.81938776,0.00745985){\color[rgb]{0,0,0}\makebox(0,0)[lt]{\lineheight{1.25}\smash{\begin{tabular}[t]{l}$n_j$\end{tabular}}}}%
    \put(0,0){\includegraphics[width=\unitlength,page=2]{KochenStone_Ai_Aj.pdf}}%
    \put(0.31399456,0.00840118){\color[rgb]{0,0,0}\makebox(0,0)[lt]{\lineheight{1.25}\smash{\begin{tabular}[t]{l}$\hT_{ij}$\end{tabular}}}}%
    \put(0,0){\includegraphics[width=\unitlength,page=3]{KochenStone_Ai_Aj.pdf}}%
    \put(-0.00391984,0.43042785){\color[rgb]{0,0,0}\makebox(0,0)[lt]{\lineheight{1.25}\smash{\begin{tabular}[t]{l}$|\hS_n|$\end{tabular}}}}%
    \put(0.91383276,0.00745985){\color[rgb]{0,0,0}\makebox(0,0)[lt]{\lineheight{1.25}\smash{\begin{tabular}[t]{l}$n$\end{tabular}}}}%
  \end{picture}%
\endgroup%
%\resizebox{.9\linewidth}{!}{
%    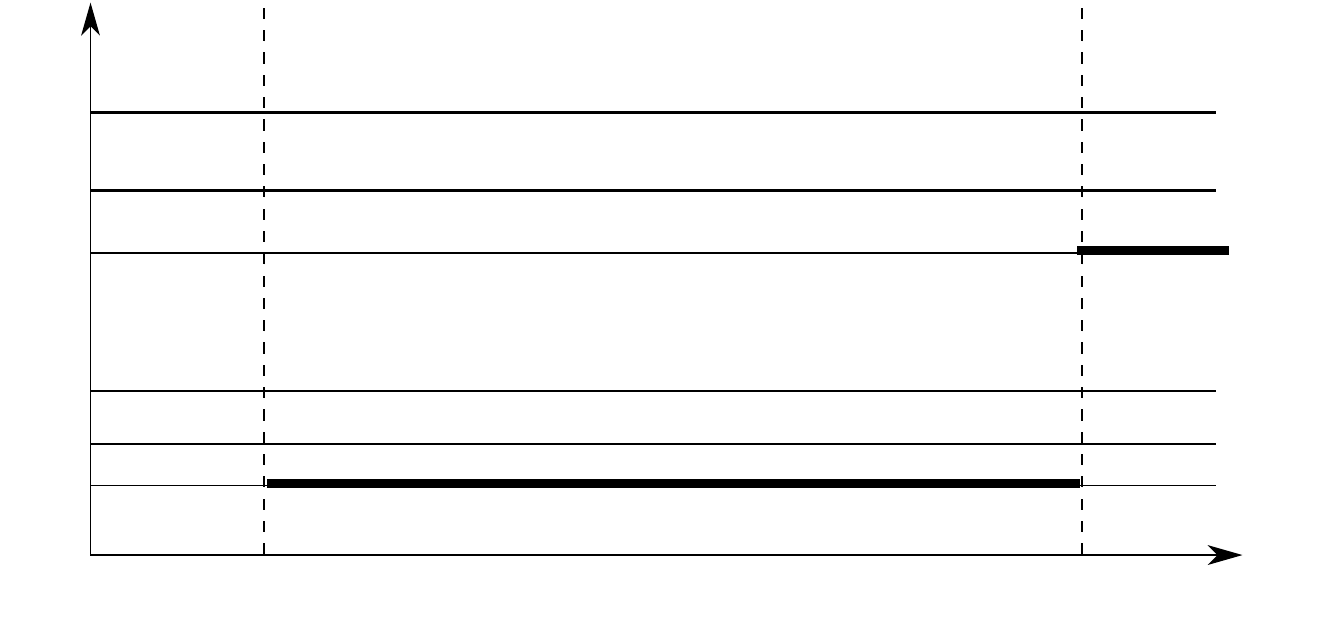
%}
\captionsetup{singlelinecheck=off}
\caption[]{
    On Lemma~\ref{lema:KochenStone_denom_simpler} we formalize the idea that
    on event $A_i \cap A_j$ the most probable scenario is
    \begin{equation*}
        |\hS_{n_i}| \in [m_i, u_i],\quad
        \hT_{ij} \ll n_j,\quad
        |\hS_{n_j}| \in [m_j, u_j]\quad\text{and} \quad
        \{\htau(m_j)=\infty\} \circ \theta_{n_j}.
    \end{equation*}
    }
\label{fig:KochenStone_Ai_Aj}
\end{figure}

\medskip
\noindent
\textbf{Interval $I_{1}$.} The most representative part of
$A_{n_{i}} \cap A_{n_{j}}$ is when
$|\smash {\widehat {S}}_{n_{i}}|\in I_{1}$. The main idea in this case
is to study how large is the stopping time
\begin{equation*}
\smash {\widehat {T}}_{ij} = \htau (l_{j}) \circ \theta _{n_{i}}
\end{equation*}
and then check how large is $|\smash {\widehat {S}}_{n_{j}}|$ (see Figure~\ref{fig:KochenStone_Ai_Aj}).
We use Lemma~\ref{lema:estimate_htau_r} to estimate
$\smash {\widehat {T}}_{ij}$. Fix $\varepsilon \in (0,1)$ and define
$s_{j} = \lfloor \tfrac{n_{j}}{\ln ^{1-\varepsilon } \ln n_{j}}
\rfloor $. For any $z$ with $|z| \in I_{1}$ we have for
$j > i \geqslant i_{0}(x)$ that
\begin{align*}
\mathbb {P}_{z} \left [\htau (l_{j}) > s_{j} \right ] &\leqslant a(z)^{-1}
e^{ -c_{0} \ln ^{1+\varepsilon } \ln n_{j} } \leqslant
\frac{c}{\ln n_{i}} \cdot e^{ -c_{0} \ln ^{1+\varepsilon } \ln n_{j} }
\\
&\leqslant \frac{c}{i \ln ^{2} i} \cdot
\frac{1}{(j \ln ^{2} j)^{c_{0}\ln ^{\varepsilon } \ln n_{j}}} := b_{ij},
\end{align*}
which is summable, i.e.,
$\sum _{i,j=i_{0}}^{\infty } b_{ij} < \infty $. Notice that
\begin{align*}
\mathbb {P}_{x} \bigl[A_{n_{i}} \cap A_{n_{j}}, |
\smash {\widehat {S}}_{n_{i}}| \in I_{1}\bigr] &\leqslant \mathbb {P}_{x}
\bigl[ A_{n_{i}} \cap A_{n_{j}}\ \big|\
 |\smash {\widehat {S}}_{n_{i}}| \in I_{1} \bigr]
\\
&\leqslant \max _{|z| \in I_{1}} \mathbb {P}_{z} \biggl[
\begin{array}{c}
\htau (l_{j}) \leqslant s_{j},\ n_{j} - n_{i} < \htau (m_{i}),
\\
\{\htau (m_{j}) = \infty \} \circ \theta _{n_{j} - n_{i}}
\end{array} \biggr] + b_{ij}.
\end{align*}
Denote the maximum above by $a_{ij}$. To estimate $a_{ij}$ we use Markov
property at time $\htau (l_{j})$, obtaining
\begin{align*}
a_{ij} &= \max _{|z| \in I_{1}} \sum _{t=1}^{s_{j}} \mathbb {P}_{z} [
\htau (l_{j}) = t < \htau (m_{i})] \cdot \mathbb {P}_{l_{j}} \left [
\begin{array}{c}
\htau (m_{i}) > n_{j} - n_{i} - t,
\\
\{\htau (m_{j}) = \infty \} \circ \theta _{n_{j} - n_{i} - t}
\end{array} \right ]
\\
&\leqslant \max _{|z| \in I_{1}} \mathbb {P}_{z} [\htau (l_{j}) <
\htau (m_{i})] \cdot \max _{1 \leqslant t \leqslant s_{j}}
\mathbb {P}_{l_{j}} \left [ \{\htau (m_{j}) = \infty \} \circ \theta _{n_{j}
- n_{i} - t} \right ]
\\
&\leqslant \mathbb {P}_{u_{i}} [\htau (l_{j}) < \htau (m_{i})] \cdot
\max _{1 \leqslant t \leqslant s_{j}} \mathbb {P}_{l_{j}} \left [ \{
\htau (m_{j}) = \infty \} \circ \theta _{n_{j} - n_{i} - t} \right ].
\end{align*}
Using the Markov property once again, together with
$n_{i} + s_{j} \ll n_{j}$, we get
\begin{align*}
\max _{1 \leqslant t \leqslant s_{j}} &\mathbb {P}_{l_{j}} \left [ \{
\htau (m_{j}) = \infty \} \circ \theta _{n_{j} - n_{i} - t} \right ]
\\[-1ex]
&\leqslant \max _{1 \leqslant t \leqslant s_{j}} \left \{ \mathbb {P}_{l_{j}}
\biggl[
\begin{array}{c}
|\smash {\widehat {S}}_{n_{j} - n_{i} - t}| \in [m_{j}, u_{j}],
\\
\{\htau (m_{j}) = \infty \} \circ \theta _{n_{j} - n_{i} - t}
\end{array} \biggr] + \mathbb {P}_{l_{j}} \bigl[ |
\smash {\widehat {S}}_{n_{j} - n_{i} - t}| > u_{j}\bigr] \right \}
\\
&\leqslant \mathbb {P}_{u_{j}}\bigl[\htau (m_{j}) = \infty \bigr] +
\mathbb {P}_{l_{j}} \bigl[ |\smash {\widehat {S}}_{n_{j} (1+o(1))}| > u_{j}
\bigr].
\end{align*}
We can use Lemma~\ref{lema:hS_in_extremes} with
$r = \tfrac{n_{j}}{\ln ^{1-\varepsilon } \ln n_{j}}$ for some
$\varepsilon \in (0, 1/3)$ to estimate
\begin{align*}
\mathbb {P}_{l_{j}} \bigl[ |\smash {\widehat {S}}_{n_{j} (1+o(1))}| > u_{j}
\bigr] &\leqslant \frac{1}{a(l_{j})} \exp \Bigl[ - c \cdot \ln ^{2 - 3
\varepsilon } \ln n_{j} \Bigr] + \exp \Bigl[ - c \cdot
\frac{u_{j}^{2}}{n} \Bigr] + \frac{c}{\sqrt{n_{j}}}
\\
&\leqslant e^{-c \cdot \ln ^{2 - 3 \varepsilon } \ln n_{j}}.
\end{align*}
Putting all these pieces together, we can write
\begin{align}
&\sum _{i_{0} \leqslant i < j \leqslant k}\!\!\!\! \mathbb {P}_{x} [A_{n_{i}}
\cap A_{n_{j}}, |\smash {\widehat {S}}_{n_{i}}| \in I_{1}]
\nonumber
\\
\label{eq:KochenStone_denom_proof_parts}
&%
\hspace{2mm}%
\leqslant \!\!\sum _{i_{0} \leqslant i < j \leqslant k}\!\!\!\!
\mathbb {P}_{u_{i}} [\htau (l_{j}) < \htau (m_{i})] \cdot \mathbb {P}_{u_{j}}[
\htau (m_{j}) = \infty ]\ + \!\!\!\sum _{i_{0} \leqslant i < j
\leqslant k}\!\!\!\!
\frac{%
\mathbb {P}_{u_{i}} [\htau (l_{j}) < \htau (m_{i})]
}{%
e^{c \cdot \ln ^{2 - 3 \varepsilon } \ln n_{j}}}.
\end{align}
We finish the proof by using~\eqref{eq:KochenStone_perturbation} to estimate
\begin{align*}
\frac{j \ln ^{2} j}{j \ln ^{2} j - i \ln ^{2} i} \cdot
\frac{1}{e^{c \cdot \ln ^{2 - 3 \varepsilon } \ln n_{j}}} &\leqslant
\frac{j \ln ^{2} j}{j \ln ^{2} j - (j-1) \ln ^{2} (j-1)} \cdot
\frac{1}{e^{c \cdot \ln ^{2 - 3 \varepsilon } \ln n_{j}}}
\\
&= j (1 + o(1)) \cdot
\frac{1}{e^{c \cdot \ln ^{2 - 3 \varepsilon } \ln n_{j}}} \ll
\frac{1}{j^{2}},
\end{align*}
which implies that the second sum on equation~\eqref{eq:KochenStone_denom_proof_parts}
is bounded by
\begin{equation*}
\!\!\!\sum _{i_{0} \leqslant i < j \leqslant k}\!\!\!\!
\frac{%
\mathbb {P}_{u_{i}} [\htau (l_{j}) < \htau (m_{i})]
}{%
e^{c \cdot \ln ^{2 - 3 \varepsilon } \ln n_{j}}} \leqslant \sum _{i=i_{0}}^{k}
\frac{2\delta + o(1)}{i \ln i} \sum _{j=i+1}^{k} \frac{1}{j^{2}}
\leqslant c_{\delta } \cdot \ln \ln k.\qedhere
\end{equation*}
\end{proof}

We conclude this section with the last missing piece to get Theorem~\ref{teo:speed}.

\begin{proof}[Proof of Lemma~\ref{lema:double_sum_estimate}]
Since $j > i$ we can write for $j = i+s$ that
\begin{equation*}
\frac{\ln j}{j \ln ^{2} j - i \ln ^{2} i} \leqslant
\frac{\ln j}{j \ln ^{2} j - i \ln ^{2} j} = \frac{1}{s \ln (i+s)}.
\end{equation*}
Notice that if we sum only on the range $i \leqslant \ln k$, we obtain
\begin{align*}
\sum _{i= i_{0}}^{\ln k} \frac{1}{i \ln i} \sum _{s=1}^{k-i}
\frac{1}{s \ln (i+s)} &\leqslant \sum _{i= i_{0}}^{\ln k}
\frac{1}{i \ln i} \sum _{s=1}^{k-i} \frac{1}{s \ln s} \
 \leqslant \sum _{i= i_{0}}^{\ln k} \frac{\ln \ln k}{i \ln i}
\\
&\leqslant (\ln \ln k) (\ln \ln \ln k),
\end{align*}
which is $o\bigl(\ln ^{2} \ln k\bigr)$. A second observation is that on
the range $1 \leqslant s \leqslant i$ we have
\begin{equation*}
\sum _{i= i_{0}}^{k} \frac{1}{i \ln i} \sum _{s=1}^{i}
\frac{1}{s \ln (i+s)} \leqslant \sum _{i= i_{0}}^{k}
\frac{1}{i \ln ^{2} i} \sum _{s=1}^{i} \frac{1}{s} \leqslant \sum _{i=
i_{0}}^{k} \frac{1}{i \ln i} \leqslant \ln \ln k.
\end{equation*}
Finally, for $s > i > \ln k$ we can write
$\ln (1 + \tfrac{i}{s}) \geqslant \tfrac{i}{2s}$ and thus
\begin{align*}
\sum _{s=i+1}^{k-i} \frac{1}{s \ln (i + s)} &\leqslant \sum _{s=i+1}^{k-i}
\frac{1}{s \ln s + i/2} \leqslant \sum _{s=i+1}^{k} \frac{1}{s \ln s} (1
+ O(\ln ^{-1} k))
\\
&\leqslant (1 + o(1)) [\ln \ln k - \ln \ln i],
\end{align*}
implying that
\begin{align*}
\sum _{i = i_{0}}^{k} \frac{1}{i \ln i} \sum _{s=1}^{k-i}
\frac{1}{s \ln (i+s)} &\leqslant o\bigl(\ln ^{2} \ln k\bigr) + \sum _{i=
\ln k}^{k} \frac{(1 + o(1))}{i \ln i} [\ln \ln k - \ln \ln i]
\\
&\leqslant (1+o(1)) \left [ \ln ^{2} \ln k - \sum _{i=\ln k}^{k}
\frac{\ln \ln i}{i \ln i} \right ].
\end{align*}
We finish the proof by noticing that we can compare the sum above with
the integral of $\tfrac{\ln \ln x}{x \ln x}$, whose anti-derivative is
$\tfrac{1}{2} \ln ^{2} \ln x$.
\end{proof}

\section{Encounters of two walks}
\label{sec:encounters_of_two_walks}

In this section we use the local CLT derived for the
$\smash {\widehat {S}}$-walk to prove that two independent copies of
$\smash {\widehat {S}}$ will meet infinitely often. The idea is to apply
the second moment method to a sequence of random variables that count the
number of encounters on some well-separated time scales. In order to exemplify
the method in a more classical setting, we first prove the following.
\begin{proposition}
\label{prop:SRW_recurrent}
Simple random walk on $\mathbb {Z}^{2}$ is recurrent.
\end{proposition}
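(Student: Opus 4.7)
The plan is to apply the second moment method to the visit-counter $Z_t := \sum_{n=1}^{t} \ind\{S_n = 0\}$, mirroring the strategy the authors will use for Theorem~\ref{teo:encounters} but in the classical setting where the Markov property at the origin gives an exact decoupling. Recurrence is equivalent to $Z_\infty = \infty$ almost surely, and this will follow from a uniform lower bound on $\mathbb{P}_0[Z_t \geq \tfrac{1}{2}\mathbb{E}_0[Z_t]]$ via Paley--Zygmund, combined with Kolmogorov's 0-1 law applied to the tail event $\{S_n = 0 \text{ i.o.}\}$.

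For the first moment I would invoke the classical two-dimensional local CLT, which gives $\mathbb{P}_0[S_n = 0] \asymp 1/n$ on the correct parity, so that $\mathbb{E}_0[Z_t] \asymp \ln t \to \infty$. For the second moment, the Markov property at time $n$ yields $\mathbb{P}_0[S_n = 0,\, S_m = 0] = \mathbb{P}_0[S_n = 0]\,\mathbb{P}_0[S_{m-n} = 0]$ for $n < m$, and a change of summation variable $k = m - n$ gives the clean estimate
\[
\mathbb{E}_0[Z_t^2] = \mathbb{E}_0[Z_t] + 2\!\!\sum_{1 \leq n < m \leq t}\!\!\mathbb{P}_0[S_n = 0]\,\mathbb{P}_0[S_{m-n} = 0] \leq \mathbb{E}_0[Z_t] + 2\,\mathbb{E}_0[Z_t]^2.
\]

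Combining the two moments, Paley--Zygmund yields
\[
\mathbb{P}_0\bigl[Z_t \geq \tfrac{1}{2}\mathbb{E}_0[Z_t]\bigr] \geq \frac{1}{4} \cdot \frac{\mathbb{E}_0[Z_t]^2}{\mathbb{E}_0[Z_t^2]} \geq \frac{1}{8 + o(1)}.
\]
Since $\{Z_\infty \geq k\} \supseteq \{Z_t \geq k\}$ for every $t \geq k$ and $\mathbb{E}_0[Z_t]/2 \to \infty$, this uniform lower bound forces $\mathbb{P}_0[Z_\infty = \infty] > 0$. As $\{Z_\infty = \infty\}$ is a tail event, Kolmogorov's 0-1 law upgrades this to probability one, i.e.\ recurrence.

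There is no genuine technical obstacle here: the exact Markov factorisation at $0$ makes the variance computation a one-liner. The pedagogical point, and the reason the authors include this warm-up, is that the scheme \emph{divergent first moment plus tight second moment plus 0-1 law} is exactly what will be adapted for Theorem~\ref{teo:encounters}. In that setting one loses the exact factorisation of joint hitting probabilities, and one must substitute the local CLT estimate~\eqref{eq:lclt} and split the time axis into well-separated blocks to keep the pair-correlation sum comparable to the square of the first moment, invoking a conditional Borel--Cantelli lemma in place of the direct 0-1 law.
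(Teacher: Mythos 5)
Your route is genuinely different from the paper's. You apply the second moment method \emph{unconditionally} to the cumulative counter $Z_t = \sum_{n\leq t}\ind\{S_n=0\}$, exploit the exact strong Markov factorisation at the origin for the pair correlations, get $\mathbb{E}_0[Z_t^2] \leq \mathbb{E}_0[Z_t] + 2\mathbb{E}_0[Z_t]^2$ in one line, and then try to upgrade a positive-probability lower bound to almost sure recurrence via a $0$--$1$ law. The paper instead runs a \emph{conditional} second moment method on well-separated time blocks $[b_k, b_{k+1}]$ with $b_k = \lfloor e^{3^k}\rfloor$: it shows $\mathbb{P}_0[A_k \mid \mathcal F_{b_{k-1}}] \geq c$ uniformly and closes with the conditional Borel--Cantelli lemma. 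The two arguments have the same moral (variance comparable to the square of the mean), but the paper's block structure is precisely the scaffolding that survives the passage to Theorem~\ref{teo:encounters}, where you must condition on $\mathcal F_{b_{k-1}}$ to control the starting points $\smash{\widehat{S}}^i_{b_{k-1}}$ and where no clean global factorisation exists; your version, while simpler here, would not adapt as directly.

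There is, however, a concrete error in your closing step: the event $\{Z_\infty = \infty\} = \{S_n = 0\ \text{i.o.}\}$ is \emph{not} a tail event for the filtration generated by the increments, so Kolmogorov's $0$--$1$ law does not apply. Changing the first increment $X_1$ shifts every partial sum $S_n$ by the same amount, so it changes whether $S_n = 0$ for \emph{all} $n$, not just finitely many --- the event visibly depends on $X_1$. The correct tool here is the Hewitt--Savage $0$--$1$ law: $\{S_n = 0\ \text{i.o.}\}$ is invariant under finite permutations of the increments (a permutation of $X_1,\dots,X_N$ leaves $S_n$ unchanged for all $n\geq N$), hence has probability $0$ or $1$. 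Alternatively, and more elementarily, once $\mathbb{P}_0[Z_\infty = \infty] > 0$ you can argue directly by the strong Markov property at successive return times: with $p := \mathbb{P}_0[\tau_+(0) < \infty]$ the number of returns is geometric, $\mathbb{P}_0[Z_\infty \geq k] = p^k$, so $\mathbb{P}_0[Z_\infty=\infty]>0$ forces $p=1$. (In fact, this geometric identity shows that the first moment alone --- $\mathbb{E}_0[Z_\infty] = p/(1-p) = \infty$ --- already implies $p=1$, so the second moment method is not logically necessary for SRW; it is included in the paper purely as a rehearsal for the conditioned-walk setting, where no analogous renewal structure at a single site is available.)
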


\begin{proof}
We prove that $S_{n}$ is recurrent by considering
\begin{equation*}
N_{k} = \sum _{n=b_{k}}^{b_{k+1}} \ind \{ S_{n}=0 \} \ \text{and} \
A_{k} = \ind \{ N_{k} \geqslant 1\} = \{S_{n}
\text{ visits $0$ during } [b_{k},b_{k+1}]\}
\end{equation*}
where $b_{k} = \lfloor e^{3^{k}} \rfloor $. The goal is to show that
$\mathbb {P}_{0} [A_{k} \text{ i.o.}] = 1$, once we prove that
\begin{equation}
\label{eq:SRW_low_bound_cond_BC}
\mathbb {P}_{0} [ A_{k} \mid \mathcal {F}_{b_{k-1}} ] \geqslant
\delta .
\end{equation}
The result will follow from a conditional Borel-Cantelli argument. Relation
\eqref{eq:SRW_low_bound_cond_BC} is obtained by applying the second moment
method to $N_{k}$. Indeed, we have that
\begin{equation*}
\mathbb {P}_{0} [ S_{n} = 0 \mid \mathcal {F}_{b_{k-1}}] =
\mathbb {P}_{S_{b_{k-1}}} [S_{n - b_{k-1}} = 0].
\end{equation*}
Since the walk starts at the origin, we have
$S_{b_{k-1}} \in B(b_{k-1})$ a.s., and for $n\in [b_{k},b_{k+1}]$ notice
that
\begin{equation*}
n - b_{k-1} = n (1 + O(b_{k-1}^{-2})).
\end{equation*}
Thus, for any $x \in B(b_{k-1})$ and $n$ with right parity we have
\begin{equation*}
\mathbb {P}_{x}\bigl[ S_{n - b_{k-1}}\!\!=0 \bigr] =
\frac{2}{\pi (n - b_{k-1})} \exp \Bigl[-\frac{|x|^{2}}{n - b_{k-1}}
\Bigr] + O(n^{-2}) = \frac{2}{\pi n} \big( 1 + O \big( b_{k-1}^{-1}
\big) \big).
\end{equation*}
Summing on $n \in [b_{k}, b_{k+1}]$,
\begin{equation*}
\mathbb {E}_{0} [N_{k} \mid \mathcal {F}_{b_{k-1}}] = \sum _{n=b_{k}}^{b_{k+1}}
\mathbb {P}_{0} [ S_{n}=0 \mid \mathcal {F}_{b_{k-1}}] \asymp \sum _{n=b_{k}}^{b_{k+1}}
\frac{1}{n} + O(\tfrac{1}{n b_{k-1}}) \asymp \ln
\frac{b_{k+1}}{b_{k}} \asymp 3^{k}.
\end{equation*}
For the second moment, we have for $x \in B(b_{k-1})$, and
$n, m+n \in [b_{k}, b_{k+1}]$ with adequate parities that
\begin{equation*}
\mathbb {P}_{x} \left [ S_{n - b_{k-1}}=0, S_{n+m-b_{k-1}}=0 \right ] =
\mathbb {P}_{x} \left [ S_{n - b_{k-1}}=0 \right ] \mathbb {P}_{0}
\left [ S_{m}=0 \right ] \leqslant \frac{C}{nm}
\end{equation*}
and summing for $n, m+n \in [b_{k}, b_{k+1}]$ we get
\begin{align*}
\sum _{n=b_{k}}^{b_{k+1}} &\sum _{m=1}^{b_{k+1} - n} \mathbb {P}_{0} [S_{n}
= 0, S_{m+n} = 0 \mid \mathcal {F}_{b_{k-1}}] \leqslant \sum _{n=b_{k}}^{b_{k+1}}
\sum _{m=2}^{b_{k+1}} \frac{C}{mn} \leqslant \ln b_{k+1} \cdot \sum _{n=b_{k}}^{b_{k+1}}
\frac{C}{n}
\\
&\leqslant C \ln b_{k+1} (\ln b_{k+1} - \ln b_{k} + 1) \leqslant C
\ln ^{2} b_{k+1} = 9C \cdot 3^{2k}.
\end{align*}
Thus, we have by Paley-Zygmund inequality
\begin{equation*}
\mathbb {P}_{0} [A_{k} \mid \mathcal {F}_{b_{k-1}}] = \mathbb {P}_{0} [N_{k}
\geqslant 1 \mid \mathcal {F}_{b_{k-1}}] \geqslant
\frac{(\mathbb {E}_{0}[N_{k} \mid \mathcal {F}_{b_{k-1}}])^{2}}{\mathbb {E}_{0}[N_{k}^{2} \mid \mathcal {F}_{b_{k-1}}]}
\geqslant \frac{c 3^{2k}}{C 3^{2k} + c 3^{k}} \geqslant c,
\end{equation*}
a positive constant. As a consequence, we can write
\begin{equation*}
\sum _{n\geqslant 1} \mathbb {P}_{0} [A_{2n} \mid \mathcal {F}_{b_{2n -
1}}] = \infty \quad \text{$\mathbb {P}_{0}$-almost surely.}
\end{equation*}
The statement that $\mathbb {P}_{0}[A_{k} \text{ i.o.}] = 1$ follows from
applying a conditional Borel-Cantelli lemma
\cite[Theorem~5.3.2]{durrett2010probability}.
\end{proof}

Now, the idea is to adapt the previous proof to show Theorem
\ref{teo:encounters}.

\begin{proof}[Proof of Theorem~\ref{teo:encounters}]
Consider the random variables
\begin{equation*}
N_{k}' := \sum _{n=b_{k}}^{b_{k+1}} \ind _{\smash {\widehat {S}}^{{1}}_{n}=
\smash {\widehat {S}}^{{2}}_{n}}.
\end{equation*}
We want to prove that
\begin{equation*}
\mathbb {P}\left [ \smash {\widehat {S}}_{n}^{1} =
\smash {\widehat {S}}_{n}^{2} \middle | \mathcal {F}_{b_{k-1}}
\right ] \geqslant \frac{c}{n} \quad \text{and} \quad \mathbb {P}
\left [ \smash {\widehat {S}}_{n}^{1}=\smash {\widehat {S}}_{n}^{2} ,
\smash {\widehat {S}}_{n+m}^{1} = \smash {\widehat {S}}_{n+m}^{2}
\middle | \mathcal {F}_{b_{k-1}} \right ] \leqslant
\frac{C}{m\cdot n}
\end{equation*}
for all $b_{k} \leqslant n \leqslant b_{k+1}$ and
$1 \leqslant m \leqslant b_{k+1} - n$, if $k$ is large enough. Considering
only one walk, we have by Corollary~\ref{coro:hS_extremes_simple} that
if we denote $r_{k} = \sqrt{b_{k}\ln b_{k}}$ then
\begin{equation*}
\mathbb {P}_{x} \bigl[ |\smash {\widehat {S}}_{b_{k}}| > r_{k}\bigr] =
\mathbb {P}_{x} \left [ |\smash {\widehat {S}}_{b_{k}}| > \sqrt{b_{k}
\ln b_{k}} \right ] = O \big( \tfrac{1}{\sqrt{b_{k}}} \big),
\end{equation*}
which is summable, so Borel-Cantelli lemma implies
$|\smash {\widehat {S}}_{b_{k}}| \leqslant r_{k}$ eventually.

\medskip
\noindent
\textbf{Lower bound.} For the first bound we just apply Proposition~\ref{prop:lclt}.
For every $z_{i} \in B(r_{k-1})$ and $n\in [b_{k},b_{k+1}]$ and
$y \in \mathbb {Z}^{2}$ with
$\sqrt{b_{k}} \leqslant |y| \leqslant 2\sqrt{n}$ we have
$\ln |y| \asymp \ln n$. In this case, we can write
\begin{align*}
\mathbb {P}_{z_{1}, z_{2}} \bigl[\smash {\widehat {S}}^{1}_{n} =
\smash {\widehat {S}}^{2}_{n}\bigr] &\geqslant \sum _{\sqrt{b_{k}}
\leqslant |y| \leqslant 2\sqrt{n}} \mathbb {P}_{z_{1}} \bigl[
\smash {\widehat {S}}^{1}_{n} = y\bigr] \mathbb {P}_{z_{2}} \bigl[
\smash {\widehat {S}}^{2}_{n} = y\bigr]
\\
&\geqslant \frac{c}{n^{2}} \cdot \# \bigl\{y; \sqrt{b_{k}} \leqslant |y|
\leqslant 2 \sqrt{n} \bigr\} \geqslant \frac{c}{n}.
\end{align*}
Apply the Markov property at time $b_{k-1}$. On event
$|\smash {\widehat {S}}_{b_{k-1}}^{i}| \leqslant r_{k-1}$ for
$i=1,2$ we have
\begin{equation}
\label{eq:low_bound_encounters}
\mathbb {P}_{x_{1}, x_{2}} \big[ \smash {\widehat {S}}^{1}_{n} =
\smash {\widehat {S}}^{2}_{n} \mid \mathcal {F}_{b_{k-1}} \big] =
\mathbb {P}_{\smash {\widehat {S}}^{1}_{b_{k-1}},
\smash {\widehat {S}}^{2}_{b_{k-1}}} \big[ \smash {\widehat {S}}^{1}_{n-b_{k-1}}
= \smash {\widehat {S}}^{2}_{n-b_{k-1}} \big] \geqslant \frac{c}{n},
\end{equation}
since $n \sim n - b_{k-1}$.

\medskip
\noindent
\textbf{Upper bound.} Notice that if $z_{i} \in B(r_{k-1})$ and
$n \in [b_{k}, b_{k+1}]$ then we have for small $\varepsilon > 0$
\begin{equation*}
|z_{i}| \leqslant \sqrt{b_{k-1} \ln b_{k-1}} \ll b_{k-1}^{1/2 + 3
\varepsilon } = b_{k}^{1/6 + \varepsilon } \leqslant n^{1/3}.
\end{equation*}
Using Proposition~\ref{prop:lclt}, we can write
\begin{align*}
\sum _{0 < |y| \leqslant \sqrt{n}} \mathbb {P}_{z_{1}, z_{2}} \bigl[
\smash {\widehat {S}}_{n}^{1} = \smash {\widehat {S}}_{n}^{2} = y
\bigr] &\leqslant \sum _{0 < |y| \leqslant \sqrt{n}} \mathbb {P}_{z_{1}}
\bigl[\smash {\widehat {S}}_{n} = y\bigr] \mathbb {P}_{z_{2}} \bigl[
\smash {\widehat {S}}_{n} = y\bigr]
\\
&\leqslant \# \bigl\{y;\; |y| \leqslant \sqrt{n} \bigr\} \cdot
\frac{C}{n^{2}}
\\
&\leqslant \frac{C}{n}.
\end{align*}
Also, if we choose $s = (n \ln n^{1/2})^{1/2}$ in Corollary~\ref{coro:hS_extremes_simple},
we have
\begin{align*}
\sum _{s < |y|} \mathbb {P}_{z_{1}, z_{2}} \bigl[
\smash {\widehat {S}}_{n}^{1} = \smash {\widehat {S}}_{n}^{2} = y
\bigr] &\leqslant \mathbb {P}_{z_{1}} \bigl[|\smash {\widehat {S}}_{n}|
> s\bigr] \cdot \mathbb {P}_{z_{2}} \bigl[|\smash {\widehat {S}}_{n}| >
s\bigr]
\\
&\leqslant \left ( c e^{- \frac{s^{2}}{n}} + c n^{-1/2} \right )^{2}
\\
&\leqslant \frac{c}{n}.
\end{align*}
We decompose the remaining range $\sqrt{n} < |y| \leqslant s$ into a collection
of disjoint intervals $I_{k} = (e^{k-1} \sqrt{n}, e^{k} \sqrt{n}]$. Notice
that to cover the range above we only need to consider
$1 \leqslant k \leqslant \lceil \tfrac{1}{2} \ln \ln n \rceil $. If
$|y| \in I_{k}$, we have for $z \in \partial B(n^{1/3})$ that
\begin{align*}
\mathbb {P}_{z} \bigl[\smash {\widehat {S}}_{n} = y\bigr] &\leqslant
\frac{a(y)}{a(z)} \cdot \mathbb {P}_{z} \bigl[S_{n} = y\bigr]
\\
&\leqslant \frac{k + \frac{1}{2} \ln n}{\frac{1}{3} \ln n} \cdot
\left [ \frac{2}{\pi n} e^{- \frac{|y|^{2}}{n}} + |y|^{-2} O(n^{-1})
\right ]
\\
&\leqslant \frac{C}{n} e^{- e^{2k-2}} + \frac{C}{n^{2}} e^{-2k},
\end{align*}
where the second inequality uses the error term from the local CLT given
in \cite[Theorem~1.2.1]{lawler2013intersections} and the last inequality
uses that $k = O(\ln \ln n)$.

For any $z_{i} \in B(r_{k-1})$, we can use once again the method of decomposing
with respect to the first time the walk hits $\partial B(n^{1/3})$. The
bound in equation~\eqref{eq:htau_decomposition} gives
\begin{equation*}
\mathbb {P}_{z_{i}} \bigl[\smash {\widehat {S}}_{n} = y\bigr]
\leqslant e^{-c n^{\delta }} + \max _{
\substack{
z \in \partial B(n^{1/3}) \\
1 \leqslant j \leqslant n^{2/3 + \delta }
}} \mathbb {P}_{z} \bigl[\smash {\widehat {S}}_{n-j} = y\bigr]
\leqslant \frac{C}{n} e^{- e^{2k-2}} + \frac{C}{n^{2}} e^{-2k},
\end{equation*}
since $e^{-c n^{\delta }} \ll n^{-l}$ for every $l \in \mathbb {N}$ and
$n-j \sim n$. Summing for $|y| \in I_{k}$ we get
\begin{align*}
\sum _{|y| \in I_{k}} \mathbb {P}_{z_{1}, z_{2}} \bigl[
\smash {\widehat {S}}_{n}^{1} = \smash {\widehat {S}}_{n}^{2} = y
\bigr] &\leqslant \# \{y;\; |y| \in I_{k} \} \cdot \left (\frac{C}{n} e^{-
e^{2k-2}} + \frac{C}{n^{2}} e^{-2k}\right )^{2}
\\
&\leqslant C (n e^{2k}) \left ( \frac{1}{n^{2}} e^{- 2e^{2k-2}} +
\frac{1}{n^{3}} e^{- e^{2k-2} - 2k} + \frac{1}{n^{4}} e^{-4k} \right )
\\
&\leqslant C \left ( \frac{1}{n} e^{- 2e^{2k-2} + 2k} +
\frac{1}{n^{2}} e^{- e^{2k-2}} + \frac{1}{n^{3}} e^{-2k} \right ).
\end{align*}
Each of the three terms on the right hand side is summable in $k$. Hence,
\begin{equation*}
\sum _{k=1}^{\lceil \frac{1}{2} \ln \ln n \rceil } \sum _{|y| \in I_{k}}
\mathbb {P}_{z_{1}, z_{2}} \bigl[\smash {\widehat {S}}_{n}^{1} =
\smash {\widehat {S}}_{n}^{2} = y\bigr] \leqslant \frac{C}{n},
\end{equation*}
concluding that
$\mathbb {P}_{z_{1}, z_{2}} \bigl[\smash {\widehat {S}}_{n}^{1} =
\smash {\widehat {S}}_{n}^{2}\bigr] \leqslant \frac{C}{n}$. Using Markov
property and the bound above, we can write that on event
$|\smash {\widehat {S}}_{b_{k-1}}^{i}| \in B(r_{k-1})$ for $i =1,2$ we
have
\begin{align*}
\mathbb {P}_{x_{1}, x_{2}} &\bigl[\smash {\widehat {S}}_{n}^{1} =
\smash {\widehat {S}}_{n}^{2},\, \smash {\widehat {S}}_{n+m}^{1} =
\smash {\widehat {S}}_{n+m}^{2} \mid \mathcal {F}_{b_{k-1}}\bigr]
\\
&= \mathbb {P}_{\smash {\widehat {S}}_{b_{k-1}}^{1},
\smash {\widehat {S}}_{b_{k-1}}^{2}} \bigl[ \smash {\widehat {S}}_{n-b_{k-1}}^{1}
= \smash {\widehat {S}}_{n-b_{k-1}}^{2},\, \smash {\widehat {S}}_{n+m-b_{k-1}}^{1}
= \smash {\widehat {S}}_{n+m-b_{k-1}}^{2} \bigr]
\\
&\leqslant \frac{C}{n} \cdot \mathbb {P}_{\smash {\widehat {S}}_{b_{k-1}}^{1},
\smash {\widehat {S}}_{b_{k-1}}^{2}} \bigl[ \smash {\widehat {S}}_{n+m-b_{k-1}}^{1}
= \smash {\widehat {S}}_{n+m-b_{k-1}}^{2} \mid \smash {\widehat {S}}_{n-b_{k-1}}^{1}
= \smash {\widehat {S}}_{n-b_{k-1}}^{2} \bigr].
\end{align*}
Finally, we have from Proposition~\ref{prop:lclt} and the independence
of walks $\smash {\widehat {S}}^{i}$ that
\begin{equation}
\label{eq:up_bound_encounters}
\mathbb {P}_{x_{1}, x_{2}} \bigl[\smash {\widehat {S}}_{n}^{1} =
\smash {\widehat {S}}_{n}^{2},\, \smash {\widehat {S}}_{n+m}^{1} =
\smash {\widehat {S}}_{n+m}^{2} \mid \mathcal {F}_{b_{k-1}}\bigr]
\leqslant \frac{C}{n} \cdot \max _{x, y \neq 0} \mathbb {P}_{x}
\bigl[\smash {\widehat {S}}_{m}^{1} = y\bigr] \leqslant \frac{C}{nm}.
\end{equation}

\noindent
\textbf{Encounters.} Having estimates~\eqref{eq:low_bound_encounters} and~\eqref{eq:up_bound_encounters},
we proceed with the second moment method. Let us consider the events
\begin{equation*}
V_{k} = \bigcap _{i=1, 2} \{ |\smash {\widehat {S}}_{b_{k-1}}^{i}|
\leqslant r_{k-1} \} \quad \text{and} \quad V_{k}^{+} = \bigcap _{l
\geqslant k} V_{l} \quad \text{and} \quad A'_{k} = \{ N'_{k}
\geqslant 1 \}.
\end{equation*}
Just like in Proposition~\ref{prop:SRW_recurrent}, we have that equations~\eqref{eq:low_bound_encounters}
and \eqref{eq:up_bound_encounters} give for $k \geqslant k_{0}$ that
\begin{equation*}
\ind _{V_{k_{0}}^{+}} \mathbb {E}_{x_{1}, x_{2}} [N'_{k} \mid
\mathcal {F}_{b_{k-1}}] \asymp \ind _{V_{k_{0}}^{+}} 3^{k} \
\text{and} \
\ind _{V_{k_{0}}^{+}} \mathbb {E}_{x_{1}, x_{2}} [(N'_{k})^{2} \mid
\mathcal {F}_{b_{k-1}}] \leqslant \ind _{V_{k_{0}}^{+}} C 3^{2k}.
\end{equation*}
and Paley-Zygmund inequality implies
\begin{equation*}
\ind _{V_{k_{0}}^{+}} \cdot \mathbb {P}_{x_{1}, x_{2}} [A'_{k} \mid
\mathcal {F}_{b_{k-1}}] \geqslant \ind _{V_{k_{0}}^{+}} \cdot
\frac{
(\mathbb {E}_{x_{1}, x_{2}} [N'_{k} \mid \mathcal {F}_{b_{k-1}}])^{2}
}{
\mathbb {E}_{x_{1}, x_{2}} [(N'_{k})^{2} \mid \mathcal {F}_{b_{k-1}}]
} \geqslant \ind _{V_{k_{0}}^{+}} \cdot c,
\end{equation*}
where $c$ is a positive constant. This implies that on
$V_{k_{0}}^{+}$ we have
\begin{equation}
\label{eq:infty_sum_cond_BC}
\sum _{k\geqslant 1} \mathbb {P}_{x_{1}, x_{2}} [A'_{2k} \mid
\mathcal {F}_{2k-1}] = \infty .
\end{equation}
Since
$\mathbb {P}_{x_{1}, x_{2}} [ V_{k},\ \text{eventually}] = \mathbb {P}_{x_{1},
x_{2}} [ \cup _{k\geqslant 1} V_{k}^{+}] = 1$, we conclude that~\eqref{eq:infty_sum_cond_BC}
holds $\mathbb {P}_{x_{1}, x_{2}}$-almost surely and finish the proof with
the conditional Borel-Cantelli lemma~\cite[Theorem~5.3.2]{durrett2010probability}.
\end{proof}

\providecommand{\bysame}{\leavevmode\hbox to3em{\hrulefill}\thinspace}

\ACKNO{
S.P.\ thanks NYU-Shanghai for support and hospitality.
The work of S.P.\ was partially supported by CNPq
(301605/2015--7).
The work of D.U.\ was supported by FAPESP
(2017/16294--4).
}

\end{document}